\documentclass{article}
\pagestyle{plain}
\usepackage{amsfonts}
\usepackage{amsmath}
\usepackage{calc}
\usepackage{graphicx}
\usepackage{fancyhdr}
\usepackage{amsmath}
\usepackage{amsfonts}
\usepackage{amssymb}
\usepackage{verbatim} % for comment environment
\usepackage{epic}
\usepackage{amsthm} 
\usepackage[colorlinks=true, pdfstartview=FitV, linkcolor=black, 
      citecolor=black, urlcolor=black]{hyperref}
\usepackage[usenames,dvipsnames]{pstricks}
\usepackage{epsfig}
\usepackage{pst-grad}
\usepackage{pst-plot}
\usepackage{float}

\setlength{\topmargin}{0in}
\setlength{\oddsidemargin}{0in}
\setlength{\evensidemargin}{0in}
\setlength{\textheight}{8.5in}
\setlength{\textwidth}{6.5in}

\numberwithin{equation}{section}
\renewcommand{\thesection}{\arabic{section}}
\renewcommand{\theequation}{\thesection.\arabic{equation}}
\renewcommand{\thefigure}{\thesection.\arabic{figure}}

\newtheorem{theorem}[equation]{Theorem}
\newtheorem{lemma}[equation]{Lemma}
\newtheorem{proposition}[equation]{Proposition}
\newtheorem{corollary}[equation]{Corollary}
\newtheorem{remark}[equation]{Remark}

\newtheorem{definition}[equation]{Definition}

\newenvironment{my_enumerate}
{\begin{enumerate}
  \setlength{\itemsep}{0pt}
  \setlength{\parskip}{0pt}
  \setlength{\parsep}{0pt}}
{\end{enumerate}}

\newcommand{\mult}{\operatorname{mult}}
\newcommand{\Sec}{\operatorname{Sec}}
\newcommand{\bbC}{{\mathbb{C}}}
\newcommand{\bbP}{{\mathbb{P}}}
\newcommand{\bbR}{{\mathbb{R}}}
\newcommand{\bbF}{{\mathbb{F}}}
\newcommand{\cD}{{\mathcal{D}}}
\newcommand{\cL}{{\mathcal{L}}}
\newcommand{\cE}{{\mathcal{E}}}
\newcommand{\cH}{{\mathcal{H}}}
\newcommand{\cX}{{\mathcal{X}}}

\newcommand{\cO}{{\mathcal{O}}}

\begin{document}
\author{Olivia Dumitrescu}
\title{Plane curves with prescribed triple points: a toric approach}
%\date{\today}
\maketitle

\begin{abstract}
We will use toric degenerations of the projective plane ${{\mathbb{P}}^ 2}$ to give a new proof of the triple points interpolation problems in the projective plane. We also give a complete list of toric surfaces that are useful as components in this degeneration.
\end{abstract}

\section{Introduction}
Let ${\mathcal{L}}_{d}(m_{1},...,m_{r})$ denote the linear system of curves in ${{\mathbb{P}}^ 2}$ of degree $d$, that pass through $r$ points $P_{1},...,P_{r}$, with multiplicity at least $m_{i}$. A natural question would be to compute the projective dimension of the linear system ${\mathcal{L}}$. 
The virtual dimension of $\mathcal{L}$ is
$$\ v
({\mathcal{L}}_{d}(m_1,...,m_r)):=  \binom {d+2} {2}-\sum^{r}_{i=1} \binom {m_{i}+1} {2}-1
$$
and the expected dimension is
$\ e ({\mathcal{L}}):=max\{v ({\mathcal{L}}),-1\}.$
There are some elementary cases for which $\ dim({\mathcal{L}}) \neq \ e ({\mathcal{L}}).$ A linear system for which $dim {\mathcal{L}}> e ({\mathcal{L}})$ is called a special linear system. However if we consider the homogeneous case when all multiplicities are equal $m_1=...=m_r=m$, the linear system (denoted by ${\mathcal{L}}_{d}(m^r)$) is expected to be non-special when $r$ is large enough. In this paper we will only consider the case $m=3$; the virtual dimension becomes $v({\mathcal{L}}_{d}(3^r))=\frac{d(d+3)}{2}-6m-1$. In \cite{CDM07} the authors used a toric degeneration of the Veronese surface into a union of projective planes for the double points interpolation problems i.e. $m=2$. This paper extends the degeneration used in \cite{CDM07} to the triple points interpolation problem.
A triple point in the projective plane imposes six conditions, so in this paper we will classify the toric surfaces $(X, {\mathcal{L}})$ with $h^{0}(X, {\mathcal{L}})=6$ (see Theorem \ref{Classification}). In particular we will analyse the ones for which the linear system becomes empty when imposing a triple point, call them $Y_{i}$. 
We will then use a toric degeneration of the embedded projective space via a linear system  ${\mathcal{L}}$ into a union of planes, quadircs and $r$ disjoint toric surfaces $Y_{i}$. On each surface $Y_{i}$ we will place one triple point and by a semicontinuity argument we will prove the non-speciality of ${\mathcal{L}}$, see
Theorem \ref {triple plane}.\\\\
We remark that this result can be generelized to any dimension i.e. a list of toric varieties becoming empty when imposing a muliplicity $m$ point could be described in a similar way. However the combinatorical degeneration and the construction of the lifting function is not very well understood. 
In  \cite{Len08} T. Lenarcik used an algebraic approach to study triple points interpolation in ${\mathbb{P}}^ {1}\times {\mathbb{P}}^ {1}$; however the list of the algebraic polygons is slightly different than ours and the connection with toric degenerations is not explicit.
Toric degenerations of three dimensional projective space have been used by S. Brannetti to give a new proof of the Alexander-Hirschowitz theorem in dimension three in \cite{Brann08}. Degenerations of  $n$-dimensional projective space have been used by E. Postinghel to give a new proof for the Alexander-Hirschowitz theorem in any dimension in \cite{Postinghel}.

%\indent
%Consider $n=d=r=m_{1}=m_{2}=2$, for example. One can notice that $\ dim ({\mathcal{L}}_{2,2})(2^2)=0$ since the plane system of conics with two general double points consists of a fixed divisor: the unique double line through the two points, so its projective dimension is $0$. This is different than the expected dimension $\ e ({\mathcal{L}}_{2,2})(2^2)=-1$.

\indent

\section{Toric varieties and toric degenerations.}\label{sec:toric}

We recall a few basic facts
about toric degenerations of projective toric varieties. We refer to \cite {hu} and \cite{WF}
for more information on the subject
and to \cite {gath} for relations with tropical geometry.

The datum of a pair $(X,\cL)$,
where $X$ is a projective, $n$--dimensional toric variety
and $\cL$ is a base point free, ample line bundle on $X$,
is equivalent to the datum of
an $n$ dimensional convex polytope ${\mathcal{P}}$ in $\bbR^ n$, determined up to translation and $SL_{n}^{\pm}({\mathbb{Z}})$.
We consider a polytope ${\mathcal{P}}$ and a \emph{subdivision} ${\mathcal{D}}$ of ${\mathcal{P}}$ into convex subpolytopes; i.e. a finite family of $n$ dimensional convex
polytopes whose union is ${\mathcal{P}}$ and such that
any two of them intersect only along a face (which may be empty).
Such a subdivision is called \emph{regular} if there is a convex, positive, linear on each face, function $F$ defined on ${\mathcal{P}}$. Such function $F$ will be called a lifting function. Regular subdivisions correspond to degeneration of toric varieties.

We will now prove a technical lemma that will enable us to easily demonstrate the existence of a lifting function when we need them. 
Let $X$ be a toric surface and ${\mathcal{P}}$ be its associated polygon. 
Consider $L$ a line that separates two disjoint polygons ${\mathcal{P}_1}$ and ${\mathcal{P}_2}$ in ${\mathcal{P}}$ and $X_{1}$ and $X_{2}$ their corresponding toric surfaces such that $L$ does not contain any integer point. 
\begin{lemma} 
The toric variety $X$ degenerates into a union of toric surfaces two of which are $X_{1}$ and $X_{2}$ which are skew.
\end{lemma}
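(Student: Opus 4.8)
The plan is to construct an explicit lifting function $F$ on $\mathcal{P}$ whose domains of linearity, i.e. the maximal cells of the induced regular subdivision $\mathcal{D}$, include both $\mathcal{P}_1$ and $\mathcal{P}_2$ as separate cells; by the correspondence between regular subdivisions and degenerations recalled above, this exhibits $X$ degenerating into a union of toric surfaces two of which are exactly $X_1$ and $X_2$. The key extra requirement, beyond merely separating $\mathcal{P}_1$ from $\mathcal{P}_2$, is that these two cells be \emph{skew}: they must meet neither along a shared edge nor at a shared vertex, which is where the hypothesis that the separating line $L$ contains no integer point will do the essential work.

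First I would use $L$ to set up a one-parameter family of candidate functions. Write $L$ as $\{x \in \mathbb{R}^2 : \ell(x) = c\}$ for an affine functional $\ell$ with integer linear part, normalized so that $\mathcal{P}_1$ lies in $\{\ell < c\}$ and $\mathcal{P}_2$ lies in $\{\ell > c\}$. The starting point is the piecewise-linear function that equals $0$ on the $\ell \le c$ side and has a convex upward bend across $L$; concretely I would take something like $F_0(x) = \max\{0,\ \lambda(\ell(x)-c)\}$ for a suitable slope $\lambda>0$, which is convex and linear on each of the two half-planes. This already realizes $L$ as the wall of a two-cell regular subdivision of $\mathbb{R}^2$, inducing on $\mathcal{P}$ a coarse subdivision into $\mathcal{P} \cap \{\ell \le c\}$ and $\mathcal{P} \cap \{\ell \ge c\}$, which contain $\mathcal{P}_1$ and $\mathcal{P}_2$ respectively.

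Next I would refine $F_0$ inside each half so that $\mathcal{P}_1$ and $\mathcal{P}_2$ become genuine cells of the subdivision, by adding a small convex, positive, piecewise-linear perturbation $\epsilon G$ that carves out the remaining faces of $\mathcal{P}_1$ and of $\mathcal{P}_2$; for $\epsilon$ small enough the wall $L$ is preserved and convexity of $F = F_0 + \epsilon G$ is maintained, since convexity is an open condition relative to the finitely many edge-crossing inequalities. The point to verify carefully is that, after this construction, the only cells adjacent to $L$ on either side are $\mathcal{P}_1$ and $\mathcal{P}_2$, so that whatever face these two cells could in principle share must lie on $L$ itself.

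The main obstacle, and the heart of the lemma, is establishing that $\mathcal{P}_1$ and $\mathcal{P}_2$ are skew rather than merely non-overlapping. In the toric dictionary, a shared edge or shared vertex of two cells corresponds to the two components $X_1$ and $X_2$ meeting along a divisor or a point, so skewness means $X_1 \cap X_2 = \emptyset$ at the level of the degeneration. Two adjacent full-dimensional cells can only intersect along their common boundary, which is forced to lie inside $L$; since $L$ carries \emph{no integer point}, the intersection $\mathcal{P}_1 \cap \mathcal{P}_2 \subset L$ contains no lattice point and hence corresponds to no torus-fixed locus shared by $X_1$ and $X_2$. Spelling out that a cell-to-cell gluing in a regular subdivision is dictated precisely by the common lattice points of the overlapping faces—so that an integer-point-free common face produces skew components—is the step I expect to require the most care; I would phrase it through the standard fact that the closed subscheme along which two toric components of such a degeneration meet is the toric variety of their common face, which is empty exactly when that face contains no lattice points.
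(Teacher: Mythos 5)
Your proposal follows essentially the same route as the paper: both start from the convex piecewise-linear function $\max\{0,L\}$ bending along the lattice-point-free line, then refine it so that $\mathcal{P}_1$ and $\mathcal{P}_2$ become separate domains of linearity, and conclude skewness of $X_1$ and $X_2$ from the fact that the two cells share no lattice point. The only difference in execution is that the paper makes the refinement explicit --- it interposes the convex hull of the boundary lattice points of $\mathcal{P}_1$ and $\mathcal{P}_2$ facing each other across $L$ as a buffer cell between $l_1$ and $l_2$ --- where you invoke a generic small convex perturbation $\epsilon G$; your closing discussion of the empty, lattice-point-free common face is a more detailed version of the paper's one-line appeal to the disjointness of $\mathcal{P}_1$ and $\mathcal{P}_2$.
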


\begin{proof}
We consider the convex piecewise linear function given by 
$$f(x,y,z)=\max \{z, L+z \}$$
Consider the image of the points on the boundary of the polygons $X_{1}$ and $X_{2}$ through $f$. 
Change the function $f$ by interposing the convex hull of the boundary points separated by $L$ between $l_{1}$ and $l_{2}$ (as in the Figure \ref{lift1}).
The function will still be convex and piecewise linear, therefore we get a regular subdivision.
We consider now the toric varieties associated to each 
polygon, and since ${\mathcal{P}_{1}}$ and ${\mathcal{P}_{2}}$ are disjoint, we obtain that two of the toric surfaces that appeared in the degeration, namely $X_1$ and $X_2$ are skew.
\end{proof}

For example, in the picture below we have four polygons, two of which are disjoint. The corresponding degeneration 
will contain four toric varieties, two of them $X_{1}$ and $X_{2}$, being skew.
\begin{figure}[h]
\begin{center}
\scalebox{1} % Change this value to rescale the drawing.
{
\begin{pspicture}(0,-2.0745313)(4.68,2.0445313)
\definecolor{color6}{rgb}{1.0,0.2,0.2}
\definecolor{color137}{rgb}{1.0,0.4,0.4}
\psline[linewidth=0.02cm](0.03,-1.5654688)(3.45,-1.5654688)
\psline[linewidth=0.02cm](0.03,-1.5454688)(0.87,-0.36546874)
\psline[linewidth=0.02cm](0.87,-0.36546874)(4.27,-0.36546874)
\psdots[dotsize=0.06](0.87,-0.38546875)
\psdots[dotsize=0.06](1.27,-0.38546875)
\psdots[dotsize=0.06](1.65,-0.38546875)
\psdots[dotsize=0.06](2.05,-0.38546875)
\psdots[dotsize=0.06](2.45,-0.38546875)
\psdots[dotsize=0.06](2.85,-0.38546875)
\psdots[dotsize=0.06](3.25,-0.38546875)
\psdots[dotsize=0.06](3.63,-0.38546875)
\psline[linewidth=0.02cm](4.25,-0.36546874)(4.47,-0.36546874)
\psline[linewidth=0.02cm](3.43,-1.5654688)(3.67,-1.5654688)
\psline[linewidth=0.02cm](3.65,-1.5654688)(4.47,-0.36546874)
\psdots[dotsize=0.06](4.05,-0.38546875)
\psdots[dotsize=0.06](4.45,-0.36546874)
\psdots[dotsize=0.06](0.85,-0.7654688)
\psdots[dotsize=0.06](1.25,-0.7654688)
\psdots[dotsize=0.06](1.65,-0.7654688)
\psdots[dotsize=0.06](2.05,-0.7654688)
\psdots[dotsize=0.06](2.45,-0.7654688)
\psdots[dotsize=0.06](2.85,-0.7654688)
\psdots[dotsize=0.06](3.25,-0.7654688)
\psdots[dotsize=0.06](3.65,-0.78546876)
\psdots[dotsize=0.06](4.05,-0.7654688)
\psdots[dotsize=0.06](0.45,-1.1654687)
\psdots[dotsize=0.06](0.85,-1.1654687)
\psdots[dotsize=0.06](1.25,-1.1654687)
\psdots[dotsize=0.06](1.65,-1.1654687)
\psdots[dotsize=0.06](2.03,-1.1654687)
\psdots[dotsize=0.06](2.45,-1.1654687)
\psdots[dotsize=0.06](2.85,-1.1654687)
\psdots[dotsize=0.06](3.25,-1.1854688)
\psdots[dotsize=0.06](3.63,-1.1654687)
\psdots[dotsize=0.06](0.03,-1.5654688)
\psdots[dotsize=0.06](0.43,-1.5654688)
\psdots[dotsize=0.06](0.85,-1.5654688)
\psdots[dotsize=0.06](1.25,-1.5654688)
\psdots[dotsize=0.06](1.63,-1.5654688)
\psdots[dotsize=0.06](2.05,-1.5654688)
\psdots[dotsize=0.06](2.45,-1.5654688)
\psdots[dotsize=0.06](2.85,-1.5654688)
\psdots[dotsize=0.06](3.25,-1.5654688)
\psdots[dotsize=0.06](3.63,-1.5654688)
\psline[linewidth=0.0080cm,linecolor=red,linestyle=dashed,dash=0.16cm 0.16cm](1.71,-1.5654688)(3.07,-0.36546874)
\psline[linewidth=0.02cm](0.85,1.4545312)(0.07,0.25453126)
\usefont{T1}{ptm}{m}{n}
\rput(1.4935937,-0.9504688){\footnotesize $X_{1}$}
\usefont{T1}{ptm}{m}{n}
\rput(3.2935936,-0.9504688){\footnotesize $X_{2}$}
\usefont{T1}{ptm}{m}{n}
\rput(1.4635937,0.94953126){\footnotesize $z=0$}
\usefont{T1}{ptm}{m}{n}
\rput(3.5035937,1.1895312){\footnotesize $L+z=0$}
\usefont{T1}{ptm}{m}{n}
\rput(3.1935937,-0.21046875){\footnotesize \color{red}$L=0$}
\psline[linewidth=0.02cm](3.95,0.95453125)(4.67,2.0345314)
\usefont{T1}{ptm}{m}{n}
\rput(3.0135937,1.6695312){\footnotesize \color{color6}$L=0$}
\psline[linewidth=0.0040cm](1.63,-1.5454688)(2.85,-0.40546876)
\psline[linewidth=0.0040cm](2.03,-1.5654688)(3.23,-0.38546875)
\psline[linewidth=0.0040cm](2.87,-0.36546874)(2.03,-1.5854688)
\psline[linewidth=0.02cm](0.85,1.4545312)(2.69,1.4545312)
\psline[linewidth=0.02cm](0.05,0.25453126)(1.51,0.23453125)
\psline[linewidth=0.02cm](2.69,1.4545312)(1.49,0.23453125)
\psline[linewidth=0.02cm](1.51,0.23453125)(2.27,0.39453125)
\psline[linewidth=0.02cm](2.67,1.4545312)(2.27,0.37453124)
\psline[linewidth=0.02cm](2.67,1.4545312)(3.29,1.5345312)
\psline[linewidth=0.02cm](3.29,1.5345312)(2.27,0.39453125)
\psline[linewidth=0.02cm](3.27,1.5145313)(4.65,2.0145311)
\psline[linewidth=0.02cm](2.27,0.39453125)(3.97,0.9345313)
\usefont{T1}{ptm}{m}{n}
\rput(1.2735938,-1.8504688){\footnotesize $l_{1}$}
\usefont{T1}{ptm}{m}{n}
\rput(2.5335937,-1.8704687){\footnotesize $l_{2}$}
\usefont{T1}{ptm}{m}{n}
\rput(1.1935937,0.04953125){\footnotesize $l_{1}$}
\usefont{T1}{ptm}{m}{n}
\rput(2.6135938,0.20953125){\footnotesize $l_{2}$}
\psline[linewidth=0.02cm,linecolor=color137,linestyle=dashed,dash=0.16cm 0.16cm](1.79,0.29453126)(2.95,1.4745313)
\end{pspicture} 
}
\end{center}
\caption{}
\label{lift1}
\end{figure}

It is easy to see how we could iterate this process. Let $M$ be a line cutting the polygon associated to $X_{2}$ and not containing any of its interior points. Then $X_{2}$ degenerates into a union of toric surfaces, two of which are skew, $Y_{2}$ and $Y_{3}$.

In this case, we conclude that $X$ degenerates into nine toric surfaces three of which $X_{1}$, $Y_{2}$ and $Y_{3}$ being skew, as the Figure \ref{lift2} indicates.
\begin{figure}[h]
\begin{center}
\scalebox{1} % Change this value to rescale the drawing.
{
\begin{pspicture}(0,-1.079375)(4.5540624,1.079375)
\definecolor{color245}{rgb}{1.0,0.0,0.2}
\definecolor{color174}{rgb}{1.0,0.2,0.2}
\psline[linewidth=0.02cm](0.03,-0.5703125)(3.45,-0.5703125)
\psline[linewidth=0.02cm](0.03,-0.5503125)(0.87,0.6296875)
\psline[linewidth=0.02cm](0.87,0.6296875)(4.27,0.6296875)
\psdots[dotsize=0.06](0.87,0.6096875)
\psdots[dotsize=0.06](1.27,0.6096875)
\psdots[dotsize=0.06](1.65,0.6096875)
\psdots[dotsize=0.06](2.05,0.6096875)
\psdots[dotsize=0.06](2.45,0.6096875)
\psdots[dotsize=0.06](2.85,0.6096875)
\psdots[dotsize=0.06](3.25,0.6096875)
\psdots[dotsize=0.06](3.63,0.6096875)
\psline[linewidth=0.02cm](4.25,0.6296875)(4.47,0.6296875)
\psline[linewidth=0.02cm](3.43,-0.5703125)(3.67,-0.5703125)
\psline[linewidth=0.02cm](3.65,-0.5703125)(4.47,0.6296875)
\psdots[dotsize=0.06](4.05,0.6096875)
\psdots[dotsize=0.06](4.45,0.6296875)
\psdots[dotsize=0.06](0.85,0.2296875)
\psdots[dotsize=0.06](1.25,0.2296875)
\psdots[dotsize=0.06](1.65,0.2296875)
\psdots[dotsize=0.06](2.05,0.2296875)
\psdots[dotsize=0.06](2.45,0.2296875)
\psdots[dotsize=0.06](2.85,0.2296875)
\psdots[dotsize=0.06](3.25,0.2296875)
\psdots[dotsize=0.06](3.65,0.2096875)
\psdots[dotsize=0.06](4.05,0.2296875)
\psdots[dotsize=0.06](0.45,-0.1703125)
\psdots[dotsize=0.06](0.85,-0.1703125)
\psdots[dotsize=0.06](1.25,-0.1703125)
\psdots[dotsize=0.06](1.65,-0.1703125)
\psdots[dotsize=0.06](2.03,-0.1703125)
\psdots[dotsize=0.06](2.45,-0.1703125)
\psdots[dotsize=0.06](2.85,-0.1703125)
\psdots[dotsize=0.06](3.25,-0.1903125)
\psdots[dotsize=0.06](3.63,-0.1703125)
\psdots[dotsize=0.06](0.03,-0.5703125)
\psdots[dotsize=0.06](0.43,-0.5703125)
\psdots[dotsize=0.06](0.85,-0.5703125)
\psdots[dotsize=0.06](1.25,-0.5703125)
\psdots[dotsize=0.06](1.63,-0.5703125)
\psdots[dotsize=0.06](2.05,-0.5703125)
\psdots[dotsize=0.06](2.45,-0.5703125)
\psdots[dotsize=0.06](2.85,-0.5703125)
\psdots[dotsize=0.06](3.25,-0.5703125)
\psdots[dotsize=0.06](3.63,-0.5703125)
\psline[linewidth=0.0080cm,linecolor=red,linestyle=dashed,dash=0.16cm 0.16cm](1.71,-0.5703125)(3.07,0.6296875)
\usefont{T1}{ptm}{m}{n}
\rput(1.4935937,0.0446875){\footnotesize $X_{1}$}
\usefont{T1}{ptm}{m}{n}
\rput(2.7535937,-0.8753125){\footnotesize $Y_{2}$}
\psline[linewidth=0.0080cm,linecolor=color174,linestyle=dashed,dash=0.16cm 0.16cm](2.65,0.2496875)(3.77,-0.3903125)
\psline[linewidth=0.02cm](2.03,-0.5703125)(2.45,-0.1503125)
\psline[linewidth=0.02cm](2.45,-0.1503125)(3.23,-0.1703125)
\psline[linewidth=0.02cm](3.23,-0.1703125)(3.63,-0.5703125)
\psline[linewidth=0.02cm](2.85,0.2296875)(3.25,0.6296875)
\psline[linewidth=0.02cm](2.85,0.2296875)(3.63,-0.1703125)
\psline[linewidth=0.02cm](3.63,-0.1703125)(4.45,0.6296875)
\usefont{T1}{ptm}{m}{n}
\rput(3.9135938,0.9046875){\footnotesize $Y_{3}$}
\usefont{T1}{ptm}{m}{n}
\rput(1.4235938,-0.8553125){\footnotesize \color{color245}$L$}
\usefont{T1}{ptm}{m}{n}
\rput(4.213594,-0.4953125){\footnotesize \color{color174}$M$}
\psline[linewidth=0.0040cm](2.43,-0.1503125)(2.83,0.2296875)
\psline[linewidth=0.0040cm](2.85,0.2296875)(3.25,-0.1703125)
\psline[linewidth=0.0040cm](3.25,-0.1703125)(3.59,-0.1503125)
\psline[linewidth=0.0040cm](3.65,-0.1503125)(3.63,-0.5703125)
\psline[linewidth=0.0040cm](1.63,-0.5703125)(2.85,0.6096875)
\psline[linewidth=0.0040cm](2.85,0.6496875)(2.03,-0.5503125)
\end{pspicture} 
}

\end{center}
\caption{}
\label{lift2}
\end{figure}

Later on, we will ignore the varieties lying in between the disjoint ones; they are only important for the degeneration and not for the analysis itself.

\section{The Classification of polygons.}

We recall that the group $SL_2^{\pm}({\mathbb{Z}})$ acts on the column vectors of ${\mathbb{R}}^ 2$ by left multiplication. This induces an action of $SL_2^{\pm}({\mathbb{Z}})$ on the set of convex polygons ${\mathcal{P}}$ by acting on its enclosed points ($SL_2^{-}({\mathbb{Z}})$ corresponds to orientation reversing lattice equivalences). Obviously,
${\mathbb{Z}^{2}}$ acts on vectors of ${\mathbb{R}}^ 2$ by translation.

Next, we will classify all convex polygons enclosing six lattice integer points modulo the actions described above.
We first start with a definition.

\begin{definition} We say the polygon ${\mathcal{P}}$ is equivalent to one in \it{standard position} if
\begin{my_enumerate}
\item It contains $O=(0,0)$ as a vertex
\item $OS$ is a vertex where $S=(0,m)$ and $m$ is the largest edge length
\item $OP$ is an edge where $P=(p,q)$ and $0\leq p <q$
\end{my_enumerate} 
\end{definition}

\begin{remark} Every polygon has a standard position.
\end{remark}

Indeed, we first choose the longest edge and then we translate one of its vertices to the origin. We will now rotate the polygon to put the longest edge on the positive side of the $x$ axis and then we shift it such that the adjacent edge lies in the upper half of the first quadrant. 
Indeed, if $OP$ is an edge with $P=(s,q)$ and $s\geq q$; then $s=mq+p$ for $0\leq p<q$ so we shift left by $m$.
We will call this procedure normalization.\\

It is easy to see that the standard position of the polygon may not be unique, it depends on the choice of the longest edge, and of the choice of the special vertex that becomes the origin.

We can now present the classification of the polygons in standard position according to $m$ (the maximum number of integral points lying on the edges of the polygon), and also according to their number of edges, $n$. Obviously, the polygons ${\mathcal{P}}$ will have at most six edges, and at most five points on an edge.
% We will denote by $R$ the point $(0,1)$. Obviously, $R\in {\mathcal{P}}$

We leave the elementary details to the reader; we only remark that Pick's lemma is useful (for more details see \cite{Dum10}).

\begin{proposition}\label{Classification} Any polygon enclosing six lattice points is equivalent to exactly one from the following list
\begin{figure}[H]
\begin{center}
{
\begin{pspicture}(0,-0.941)(10.47825,0.911)
\psdots[dotsize=0.074](0.037,-0.336)
\psdots[dotsize=0.074](0.437,-0.336)
\psdots[dotsize=0.074](0.837,-0.336)
\psdots[dotsize=0.074](1.237,-0.336)
\psdots[dotsize=0.074](1.637,-0.336)
\psline[linewidth=0.02cm](0.017,-0.336)(1.617,-0.336)
\psdots[dotsize=0.074](0.037,0.064)
\psline[linewidth=0.02cm](0.037,0.064)(0.037,-0.296)
\psline[linewidth=0.02cm](0.017,0.084)(1.617,-0.316)
\psdots[dotsize=0.074](2.237,-0.336)
\psdots[dotsize=0.074](2.637,-0.336)
\psdots[dotsize=0.074](3.037,-0.336)
\psdots[dotsize=0.074](3.457,-0.336)
\psdots[dotsize=0.074](2.237,0.064)
\psdots[dotsize=0.074](2.657,0.064)
\psline[linewidth=0.02cm](2.217,0.084)(2.217,-0.336)
\psline[linewidth=0.02cm](2.217,-0.336)(3.457,-0.336)
\psline[linewidth=0.02cm](2.237,0.084)(2.637,0.084)
\psline[linewidth=0.02cm](2.657,0.084)(3.457,-0.336)
\psdots[dotsize=0.074](4.077,-0.336)
\psdots[dotsize=0.074](4.497,-0.336)
\psdots[dotsize=0.074](4.877,-0.336)
\psdots[dotsize=0.074](4.077,0.064)
\psdots[dotsize=0.074](4.077,0.464)
\psdots[dotsize=0.074](4.477,0.064)
\psline[linewidth=0.02cm](4.077,0.464)(4.077,-0.316)
\psline[linewidth=0.02cm](4.077,-0.336)(4.877,-0.336)
\psline[linewidth=0.02cm](4.077,0.484)(4.877,-0.316)
\usefont{T1}{ptm}{m}{n}
\rput(0.4616875,-0.801){\footnotesize m=5}
\usefont{T1}{ptm}{m}{n}
\rput(2.6346562,-0.801){\footnotesize m=4}
\psdots[dotsize=0.074](5.477,-0.316)
\psdots[dotsize=0.074](5.897,-0.316)
\psdots[dotsize=0.074](6.297,-0.316)
\psdots[dotsize=0.074](5.477,0.084)
\psdots[dotsize=0.074](5.897,0.084)
\psdots[dotsize=0.074](6.277,0.084)
\psline[linewidth=0.02cm](5.477,0.084)(5.477,-0.296)
\psline[linewidth=0.02cm](5.477,0.084)(6.277,0.084)
\psline[linewidth=0.02cm](6.297,0.084)(6.297,-0.316)
\psline[linewidth=0.02cm](5.477,-0.316)(6.277,-0.316)
\psdots[dotsize=0.074](7.397,-0.336)
\psdots[dotsize=0.074](7.817,-0.336)
\psdots[dotsize=0.074](8.217,-0.336)
\psdots[dotsize=0.074](7.397,0.084)
\psdots[dotsize=0.074](7.817,0.064)
\psdots[dotsize=0.074](7.817,0.484)
\psline[linewidth=0.02cm](7.797,0.484)(7.397,0.084)
\psline[linewidth=0.02cm](7.397,0.084)(7.397,-0.316)
\psline[linewidth=0.02cm](7.377,-0.336)(8.197,-0.336)
\psline[linewidth=0.02cm](7.817,0.504)(8.197,-0.316)
\usefont{T1}{ptm}{m}{n}
\rput(4.331844,-0.801){\footnotesize  m=3}
\usefont{T1}{ptm}{m}{n}
\rput(6.0849686,-0.801){\footnotesize  m=3}
\usefont{T1}{ptm}{m}{n}
\rput(7.9491873,-0.801){\footnotesize  m=3}
\psdots[dotsize=0.074](9.237,-0.336)
\psdots[dotsize=0.074](9.657,-0.336)
\psdots[dotsize=0.074](10.057,-0.336)
\psdots[dotsize=0.074](9.657,0.064)
\psdots[dotsize=0.074](9.657,0.464)
\psdots[dotsize=0.074](9.657,0.864)
\psline[linewidth=0.02cm](9.637,0.884)(9.237,-0.316)
\psline[linewidth=0.02cm](9.657,0.864)(10.057,-0.336)
\psline[linewidth=0.02cm](9.237,-0.336)(10.057,-0.336)
\usefont{T1}{ptm}{m}{n}
\rput(9.741844,-0.801){\footnotesize  m=3}
\end{pspicture} 
}
\end{center}
\end{figure}
% Generated with LaTeXDraw 2.0.2
% Mon Feb 01 00:06:17 MST 2010
% \usepackage[usenames,dvipsnames]{pstricks}
% \usepackage{epsfig}
% \usepackage{pst-grad} % For gradients
% \usepackage{pst-plot} % For axes
%\scalebox{1} % Change this value to rescale the drawing.
% Generated with LaTeXDraw 2.0.2
% Mon Feb 01 00:08:52 MST 2010
% \usepackage[usenames,dvipsnames]{pstricks}
% \usepackage{epsfig}
% \usepackage{pst-grad} % For gradients
% \usepackage{pst-plot} % For axes
% Generated with LaTeXDraw 2.0.2
% Mon Feb 01 00:10:12 MST 2010
% \usepackage[usenames,dvipsnames]{pstricks}
% \usepackage{epsfig}
% \usepackage{pst-grad} % For gradients
% \usepackage{pst-plot} % For axes
% Generated with LaTeXDraw 2.0.2
% Mon Feb 01 13:48:34 MST 2010
% \usepackage[usenames,dvipsnames]{pstricks}
% \usepackage{epsfig}
% \usepackage{pst-grad} % For gradients
% \usepackage{pst-plot} % For axes
\begin{figure}[H]
\begin{center}
\scalebox{1} % Change this value to rescale the drawing.
{
\begin{pspicture}(0,-1.8639235)(10.626249,1.8639235)
\psdots[dotsize=0.074](8.4375,-1.210486)
\psdots[dotsize=0.074](8.8475,-1.210486)
\psdots[dotsize=0.074](8.8475,-0.82048607)
\psdots[dotsize=0.074](8.8475,-0.42048606)
\psdots[dotsize=0.074](8.8475,0.0)
\psdots[dotsize=0.074](9.257501,0.0)
\psline[linewidth=0.02cm](8.837501,0.019513939)(8.4375,-1.2004861)
\psline[linewidth=0.02cm](9.257501,0.0)(8.8575,-1.2004861)
\psline[linewidth=0.02cm](8.8575,0.0)(9.257501,0.0)
\psline[linewidth=0.02cm](8.4375,-1.2004861)(8.837501,-1.2004861)
\psdots[dotsize=0.074](9.617499,-1.220486)
\psdots[dotsize=0.074](10.037499,-1.220486)
\psdots[dotsize=0.074](9.617499,-0.8004861)
\psdots[dotsize=0.074](10.037499,-0.8004861)
\psdots[dotsize=0.074](10.037499,-0.38048607)
\psdots[dotsize=0.074](10.4575,-0.8004861)
\psline[linewidth=0.02cm](9.617499,-0.78048605)(9.617499,-1.220486)
\psline[linewidth=0.02cm](9.5975,-1.220486)(10.0175,-1.220486)
\psline[linewidth=0.02cm](10.0175,-0.38048607)(9.617499,-0.78048605)
\psline[linewidth=0.02cm](10.037499,-0.34048605)(10.4775,-0.8004861)
\psline[linewidth=0.02cm](10.4575,-0.8004861)(10.037499,-1.220486)
\usefont{T1}{ptm}{m}{n}
\rput(0.5825,-1.6754861){\footnotesize m=2, n=3}
\usefont{T1}{ptm}{m}{n}
\rput(2.0125,-1.6754861){\footnotesize m=2, n=3}
\usefont{T1}{ptm}{m}{n}
\rput(3.52875,-1.6754861){\footnotesize m=2, n=3}
\usefont{T1}{ptm}{m}{n}
\rput(5.315,-1.6754861){\footnotesize m=2, n=4}
\usefont{T1}{ptm}{m}{n}
\rput(7.030312,-1.6754861){\footnotesize m=2, n=4}
\usefont{T1}{ptm}{m}{n}
\rput(8.580312,-1.6754861){\footnotesize m=2, n=4}
\usefont{T1}{ptm}{m}{n}
\rput(9.995312,-1.6754861){\footnotesize m=2, n=5}
\psdots[dotsize=0.074](0.4339062,-1.2093749)
\psdots[dotsize=0.074](0.8339062,-1.229375)
\psdots[dotsize=0.074](0.8339062,-0.8093749)
\psdots[dotsize=0.074](0.8339062,-0.38937494)
\psdots[dotsize=0.074](0.8339062,-0.02937495)
\psdots[dotsize=0.074](1.2539062,1.590625)
\psline[linewidth=0.02cm](0.4139062,-1.2093749)(0.8139062,-1.2093749)
\psline[linewidth=0.02cm](0.8339062,-1.229375)(1.2539062,1.5706251)
\psline[linewidth=0.02cm](0.4139062,-1.1893749)(1.2339061,1.5706251)
\psdots[dotsize=0.074](1.6339062,-1.2093749)
\psdots[dotsize=0.074](2.0339062,-1.2093749)
\psdots[dotsize=0.074](2.0339062,-0.8093749)
\psdots[dotsize=0.074](2.0539062,-0.40937495)
\psdots[dotsize=0.074](2.8339062,1.590625)
\psline[linewidth=0.02cm](1.6139061,-1.2093749)(2.0339062,-1.2093749)
\psline[linewidth=0.02cm](2.0339062,-1.2093749)(2.8339062,1.590625)
\psline[linewidth=0.02cm](1.6139061,-1.1893749)(2.8139062,1.590625)
\psdots[dotsize=0.074](2.4339063,0.41062504)
\psdots[dotsize=0.074](3.2339063,-1.2093749)
\psdots[dotsize=0.074](3.6339061,-1.2093749)
\psdots[dotsize=0.074](3.6339061,-0.8093749)
\psdots[dotsize=0.074](3.2339063,-0.82937497)
\psdots[dotsize=0.074](3.6339061,-0.40937495)
\psdots[dotsize=0.074](4.033906,0.37062505)
\psline[linewidth=0.02cm](3.2339063,-0.8093749)(3.2339063,-1.2093749)
\psline[linewidth=0.02cm](3.2339063,-1.2093749)(3.6139061,-1.2093749)
\psline[linewidth=0.02cm](3.2339063,-0.8093749)(4.013906,0.37062505)
\psline[linewidth=0.02cm](4.033906,0.37062505)(3.653906,-1.2093749)
\psdots[dotsize=0.074](4.853906,-1.229375)
\psdots[dotsize=0.074](5.2339063,-1.229375)
\psdots[dotsize=0.074](5.2339063,-0.84937495)
\psdots[dotsize=0.074](5.2339063,-0.42937493)
\psdots[dotsize=0.074](5.2339063,-0.02937495)
\psdots[dotsize=0.074](5.6539063,-0.84937495)
\psdots[dotsize=0.074](6.6539063,-1.229375)
\psdots[dotsize=0.074](7.033906,-1.229375)
\psdots[dotsize=0.074](7.053906,-0.82937497)
\psdots[dotsize=0.074](7.053906,-0.40937495)
\psdots[dotsize=0.074](7.033906,-0.02937495)
\psdots[dotsize=0.074](7.453906,-0.42937493)
\psline[linewidth=0.02cm](4.853906,-1.229375)(5.2139063,-1.229375)
\psline[linewidth=0.02cm](5.2339063,-1.229375)(5.6539063,-0.84937495)
\psline[linewidth=0.02cm](5.2339063,-0.00937495)(5.6539063,-0.82937497)
\psline[linewidth=0.02cm](4.833906,-1.1893749)(5.2139063,-0.00937495)
\psline[linewidth=0.02cm](6.6339064,-1.229375)(7.013906,-1.229375)
\psline[linewidth=0.02cm](7.033906,-1.2093749)(7.433906,-0.42937493)
\psline[linewidth=0.02cm](7.033906,-0.00937495)(7.433906,-0.40937495)
\psline[linewidth=0.02cm](7.033906,-0.00937495)(6.6339064,-1.2093749)
\usefont{T1}{ptm}{m}{n}
\rput(0.93015623,1.6967361){\footnotesize (2,7)}
\usefont{T1}{ptm}{m}{n}
\rput(2.5301561,1.6967361){\footnotesize (3,7)}
\usefont{T1}{ptm}{m}{n}
\rput(2.8101563,0.4367361){\footnotesize (2,5)}
\usefont{T1}{ptm}{m}{n}
\rput(4.3901563,0.4167361){\footnotesize (2,5)}
\end{pspicture} 
}
\end{center}
\caption{}
\end{figure}

\end{proposition}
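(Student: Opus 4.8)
The plan is to put every such polygon into the standard position of the preceding Definition and then run a finite enumeration organized by the two invariants that survive normalization: the number $m$ of lattice points on the longest edge and the number $n$ of edges. By the Remark, every $\mathcal{P}$ is equivalent under $SL_2^{\pm}(\mathbb{Z})\ltimes\mathbb{Z}^2$ to one in standard position, so I may assume $O=(0,0)$ is a vertex, the longest edge issues from $O$ along a coordinate axis, and the adjacent edge $OP$ with $P=(p,q)$ satisfies $0\le p<q$. This collapses the continuous symmetry to the finite ambiguity of which longest edge and which of its endpoints is sent to the origin; two standard representatives related by such a choice are to be declared equivalent, and ruling out that this produces repeated entries in the list is part of the verification.

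Next I would bound the invariants and enumerate by strata. Since $\mathcal{P}$ contains exactly six lattice points and is genuinely two--dimensional, at least one point lies off the line of any edge, so the longest edge carries at most five points and $m\in\{2,3,4,5\}$; as the vertices are distinct lattice points, $3\le n\le 6$. Pick's lemma in the form $\mathrm{Area}=I+\tfrac{B}{2}-1$ with $I+B=6$ gives $\mathrm{Area}=5-\tfrac{B}{2}$, a bookkeeping identity that both constrains the placement of the remaining points and rules out $n=6$ (an empty convex lattice hexagon is impossible). I then stratify from the top: for $m=5$ the longest edge holds five points, say $(0,0),\dots,(4,0)$, and convexity together with the maximality of the edge length force the last point to lattice distance one, giving a single triangle; for $m=4$ and $m=3$ the two or three free points are placed using the same two constraints --- convexity and ``no edge longer than the chosen one'' --- and checked against the Pick identity; for $m=2$ every edge is primitive, so the boundary points are exactly the $n$ vertices with $6-n$ interior points, and I split further by $n\in\{3,4,5\}$ to recover the remaining figures.

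The hard part will be the low--$m$ bookkeeping, especially $m=2$ and $m=3$, where the free points have the most positional freedom and convexity must be combined carefully with the edge-maximality constraint. The subtler half is irredundancy rather than completeness: one must verify that the finitely many alternative standard positions arising from different choices of longest edge and base vertex do not secretly identify two distinct figures, nor list a single polygon twice. This is precisely where an explicit, case-by-case $SL_2^{\pm}(\mathbb{Z})$ reduction is unavoidable rather than merely invoked, and it is the step I expect to consume most of the work.
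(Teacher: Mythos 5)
Your proposal follows essentially the same route as the paper: reduce to the standard position via the $SL_2^{\pm}(\mathbb{Z})\ltimes\mathbb{Z}^2$ normalization, stratify by the maximal edge length $m$ and the number of edges $n$, and finish with a finite enumeration controlled by convexity and Pick's formula — which is exactly the strategy the paper indicates before deferring the case-by-case details to \cite{Dum10}. Your added attention to irredundancy (the residual ambiguity in the choice of longest edge and base vertex) is a point the paper acknowledges but likewise leaves to the reader, so nothing in your plan diverges from the intended argument.
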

We now recall that any rational convex polygon ${\mathcal{P}}$ in ${\mathbb{R}^{n}}$ enclosing a fixed number of integer lattice points defines an $n$ dimensional projective toric variety $X_{\mathcal{P}}$ endowed with an ample line bundle on $X_{\mathcal{P}}$ which has the integer points of the polygon as sections. We get the following result

\begin{corollary} Any toric surface endowed with an ample line bundle with six sections is completely described by exactly one of the polygons from the above list.

\end{corollary}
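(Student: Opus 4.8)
The plan is to reduce the statement entirely to the polygon classification of Proposition~\ref{Classification} by invoking the dictionary between polarized toric surfaces and lattice polygons recalled at the start of Section~\ref{sec:toric}. First I would restate that correspondence carefully: a pair $(X,\cL)$, with $X$ a projective toric surface and $\cL$ a base point free ample line bundle, corresponds to a two-dimensional convex lattice polygon $\mathcal{P}\subset\bbR^2$, and this is a bijection only after both sides are taken up to the appropriate equivalence, namely isomorphism of polarized pairs on the toric side and the action of $SL_2^{\pm}(\mathbb{Z})$ together with translations by $\mathbb{Z}^2$ on the polygon side. Emphasizing that the correspondence is a bijection on equivalence classes, not merely a map, is what will eventually justify the word \emph{exactly} in the conclusion.

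The one genuinely arithmetic step is to match the hypothesis $h^0(X,\cL)=6$ with the combinatorial condition that $\mathcal{P}$ encloses exactly six lattice points. Here I would use the fact, recalled in the sentence immediately preceding the corollary, that the integer points of $\mathcal{P}$ furnish a basis of $H^0(X_{\mathcal{P}},\cL)$: each $u\in\mathcal{P}\cap\mathbb{Z}^2$ contributes a character $\chi^u$, and these are linearly independent and span because $\cL$ is globally generated. Consequently $h^0(X,\cL)=\#(\mathcal{P}\cap\mathbb{Z}^2)$, so "six sections" is literally "six enclosed lattice points," and the ampleness/base point freeness hypothesis is exactly what licenses this clean count.

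With both identifications in hand the corollary is immediate: given $(X,\cL)$ with six sections, pass to its polygon $\mathcal{P}$, which then encloses six lattice points; Proposition~\ref{Classification} asserts that $\mathcal{P}$ is equivalent under $SL_2^{\pm}(\mathbb{Z})$ and translation to exactly one polygon from the list; and by the bijectivity of the dictionary that polygon describes $(X,\cL)$ uniquely up to isomorphism. I expect the only real subtlety to be bookkeeping on the equivalence relations rather than any new geometry: one must be sure that the uniqueness (the "exactly one") in Proposition~\ref{Classification} is honest, i.e. that no two polygons on the list are lattice-equivalent, since that is precisely what propagates to uniqueness of the pair $(X,\cL)$. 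All the substantive content is already carried by the Proposition, so no further input is required.
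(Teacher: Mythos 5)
Your argument is exactly the one the paper intends: the corollary is deduced directly from Proposition~\ref{Classification} via the standard dictionary between polarized toric surfaces and lattice polygons up to $SL_2^{\pm}(\mathbb{Z})$ and translation, with $h^0(X,\cL)$ identified with the number of enclosed lattice points (as the paper recalls in the sentence immediately preceding the corollary). Your elaboration of the equivalence-relation bookkeeping is a faithful, slightly more careful writeup of the same reduction, so there is nothing to add.
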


\section{Triple Point Analysis.}

%In our case, we obtain that any plane convex polygon enclosing six points corresponds to the datum of toric varieties and a line bundle on them with six sections. 
We first observe that six, the number of integer points enclosed by the polygon, represents exactly the number of conditions imposed by a triple point. We will now classify all polygons from Proposition \ref{Classification} for which their corresponding linear system becomes empty when imposing a triple point. There are two methods for testing the emptiness of these linear systems: an algebraic method and a geometric method and we will briefly describe them below.  For the algebraic approach, checking that a linear system is non-empty when imposing a triple point reduces to showing that the conditions imposed by a triple point in ${\mathbb{P}^{2}}$ are dependent. For this, one needs to look at the rank of the six by six matrix where the first column represents the sections of the line bundle and the other five columns represent all first and second derivatives in $x$ and $y$. We conclude that the six  conditions are dependent if and only if the determinant of the matrix is identically zero.
% and therefore the linear system with a triple point becomes non-empty. 
The geometric method for testing when a planar linear system is empty is to explicitly find it and show that it contains no curve, using ${\mathbb{P}^{2}}$ as a minimal model for the surface $X$ and writing its resolution of singularities.

\begin{remark}\label{Elimination} The corresponding linear systems of the following polygons are non-empty when imposing a base point with multiplicity three.
\begin{figure}[H]
\begin{center}
\scalebox{1} % Change this value to rescale the drawing.
{
\begin{pspicture}(0,-0.941)(7.47825,0.921)
\psdots[dotsize=0.074](0.037,-0.336)
\psdots[dotsize=0.074](0.437,-0.336)
\psdots[dotsize=0.074](0.837,-0.336)
\psdots[dotsize=0.074](1.237,-0.336)
\psdots[dotsize=0.074](1.637,-0.336)
\psline[linewidth=0.02cm](0.017,-0.336)(1.617,-0.336)
\psdots[dotsize=0.074](0.037,0.064)
\psline[linewidth=0.02cm](0.037,0.064)(0.037,-0.296)
\psline[linewidth=0.02cm](0.017,0.084)(1.617,-0.316)
\psdots[dotsize=0.074](2.237,-0.336)
\psdots[dotsize=0.074](2.637,-0.336)
\psdots[dotsize=0.074](3.037,-0.336)
\psdots[dotsize=0.074](3.457,-0.336)
\psdots[dotsize=0.074](2.237,0.064)
\psdots[dotsize=0.074](2.657,0.064)
\psline[linewidth=0.02cm](2.217,0.084)(2.217,-0.336)
\psline[linewidth=0.02cm](2.217,-0.336)(3.457,-0.336)
\psline[linewidth=0.02cm](2.237,0.084)(2.637,0.084)
\psline[linewidth=0.02cm](2.657,0.084)(3.457,-0.336)
\usefont{T1}{ptm}{m}{n}
\rput(0.4616875,-0.801){\footnotesize}
\usefont{T1}{ptm}{m}{n}
\rput(2.6346562,-0.801){\footnotesize}
\psdots[dotsize=0.074](4.177,-0.316)
\psdots[dotsize=0.074](4.597,-0.316)
\psdots[dotsize=0.074](4.997,-0.316)
\psdots[dotsize=0.074](4.177,0.084)
\psdots[dotsize=0.074](4.597,0.084)
\psdots[dotsize=0.074](4.977,0.084)
\psline[linewidth=0.02cm](4.177,0.084)(4.177,-0.296)
\psline[linewidth=0.02cm](4.177,0.084)(4.977,0.084)
\psline[linewidth=0.02cm](4.997,0.084)(4.997,-0.316)
\psline[linewidth=0.02cm](4.177,-0.316)(4.977,-0.316)
\usefont{T1}{ptm}{m}{n}
\rput(4.784969,-0.801){\footnotesize}
\psdots[dotsize=0.074](6.237,-0.336)
\psdots[dotsize=0.074](6.657,-0.336)
\psdots[dotsize=0.074](7.057,-0.336)
\psdots[dotsize=0.074](6.657,0.064)
\psdots[dotsize=0.074](6.657,0.464)
\psdots[dotsize=0.074](6.657,0.864)
\psline[linewidth=0.02cm](6.637,0.884)(6.237,-0.316)
\psline[linewidth=0.02cm](6.657,0.864)(7.057,-0.336)
\psline[linewidth=0.02cm](6.237,-0.336)(7.057,-0.336)
\usefont{T1}{ptm}{m}{n}
\rput(6.7418437,-0.801){\footnotesize}
\end{pspicture} 
}
\end{center}
\end{figure}

% Generated with LaTeXDraw 2.0.2
% Mon Feb 01 00:20:09 MST 2010
% \usepackage[usenames,dvipsnames]{pstricks}
% \usepackage{epsfig}
% \usepackage{pst-grad} % For gradients
% \usepackage{pst-plot} % For axes
% Generated with LaTeXDraw 2.0.2
% Mon Feb 01 00:22:41 MST 2010
% \usepackage[usenames,dvipsnames]{pstricks}
% \usepackage{epsfig}
% \usepackage{pst-grad} % For gradients
% \usepackage{pst-plot} % For axes
% Generated with LaTeXDraw 2.0.2
% Mon Feb 01 00:45:20 MST 2010
% \usepackage[usenames,dvipsnames]{pstricks}
% \usepackage{epsfig}
% \usepackage{pst-grad} % For gradients
% \usepackage{pst-plot} % For axes
%\scalebox{1} % Change this value to rescale the drawing.
% Generated with LaTeXDraw 2.0.2
% Mon Feb 01 13:43:22 MST 2010
% \usepackage[usenames,dvipsnames]{pstricks}
% \usepackage{epsfig}
% \usepackage{pst-grad} % For gradients
% \usepackage{pst-plot} % For axes
\begin{figure}[H]
\begin{center}
\scalebox{1} % Change this value to rescale the drawing.
{
\begin{pspicture}(0,-1.7515556)(6.4605937,1.7315556)
\psdots[dotsize=0.074](5.5835943,-1.1065556)
\psdots[dotsize=0.074](5.993594,-1.1065556)
\psdots[dotsize=0.074](5.993594,-0.71655554)
\psdots[dotsize=0.074](5.993594,-0.31655556)
\psdots[dotsize=0.074](5.993594,0.10344444)
\psdots[dotsize=0.074](6.4035935,0.10344444)
\psline[linewidth=0.02cm](5.983593,0.123444445)(5.5835943,-1.0965556)
\psline[linewidth=0.02cm](6.4035935,0.10344444)(6.0035944,-1.0965556)
\psline[linewidth=0.02cm](6.0035944,0.10344444)(6.4035935,0.10344444)
\psline[linewidth=0.02cm](5.5835943,-1.0965556)(5.983593,-1.0965556)
\usefont{T1}{ptm}{m}{n}
\rput(0.5232813,-1.5915556){\footnotesize}
\usefont{T1}{ptm}{m}{n}
\rput(2.1879687,-1.6115556){\footnotesize}
\usefont{T1}{ptm}{m}{n}
\rput(3.923281,-1.6115556){\footnotesize}
\usefont{T1}{ptm}{m}{n}
\rput(5.6732807,-1.5915556){\footnotesize}
\psdots[dotsize=0.074](0.34,-1.1054444)
\psdots[dotsize=0.074](0.72,-1.1054444)
\psdots[dotsize=0.074](0.72,-0.6854445)
\psdots[dotsize=0.074](0.72,-0.28544444)
\psdots[dotsize=0.074](0.72,0.11455555)
\psdots[dotsize=0.074](1.14,1.6745555)
\psline[linewidth=0.02cm](0.32,-1.1054444)(0.72,-1.1054444)
\psline[linewidth=0.02cm](0.72,-1.0854445)(1.14,1.6545556)
\psline[linewidth=0.02cm](0.32,-1.0854445)(1.12,1.6945555)
\psdots[dotsize=0.074](1.74,-1.1054444)
\psdots[dotsize=0.074](2.14,-1.1054444)
\psdots[dotsize=0.074](2.14,-0.70544446)
\psdots[dotsize=0.074](2.14,-0.32544443)
\psdots[dotsize=0.074](2.16,0.09455556)
\psdots[dotsize=0.074](2.56,-0.70544446)
\psline[linewidth=0.02cm](1.74,-1.1054444)(2.14,0.09455556)
\psline[linewidth=0.02cm](1.74,-1.1054444)(2.12,-1.1054444)
\psline[linewidth=0.02cm](2.14,-1.1054444)(2.56,-0.70544446)
\psline[linewidth=0.02cm](2.16,0.11455555)(2.56,-0.70544446)
\psdots[dotsize=0.074](3.94,-1.1054444)
\psdots[dotsize=0.074](3.94,-0.30544445)
\psdots[dotsize=0.074](3.94,0.09455556)
\psdots[dotsize=0.074](3.94,-0.70544446)
\psdots[dotsize=0.074](3.54,-1.1054444)
\psdots[dotsize=0.074](4.34,-0.30544445)
\psline[linewidth=0.02cm](3.92,0.11455555)(3.54,-1.1054444)
\psline[linewidth=0.02cm](3.54,-1.1054444)(3.92,-1.1054444)
\psline[linewidth=0.02cm](3.94,-1.1054444)(4.34,-0.30544445)
\psline[linewidth=0.02cm](3.92,0.11455555)(4.32,-0.28544444)
\end{pspicture} 
}

\end{center}
\caption{}\end{figure}

\end{remark}
 
\begin{proof} It is easy to check that the algebraic conditions imposed by at least four sections on a line are always dependent. Indeed, we have two possible cases, if the line of sections is an edge, or if is enclosed by the polygon. For the first case, we can only have sections on two levels so the vanishing of the second derivative in $y$ gives a dependent condition (The same argument applies for case $m=3$ representing the embedded  ${\mathbb{P}^{1}}\times {\mathbb{P}^{1}}$). For the second case we notice that the vanishing of the first derivative in $y$ and the second derivative in $x$ and $y$ give two linearly dependent conditions.
\end{proof}

We will use the Remark \ref{Elimination} to eliminate the polygons that don't have the desired property and we obtain five polygons for which we will study the corresponding algebraic surfaces and linear systems using toric geometry methods.

For any polygon consider its fan by dualizing the polygon's angles and in the case that the toric variety obtained by gluing the cones is singular, take it's resolution of singularites. In this way we obtain the all the toric surfaces using ${\mathbb{P}^{2}}$ as a minimal model. The associated linear system may not have general points. In general, we will use the notations ${\mathcal{L}}_{d}([1,1]), {\mathcal{L}}_{d}([2,1]), {\mathcal{L}}_{d}([1,1,1])$ for linear systems of degree $d$ that pass through a base point with a defined tangent, a double point with a defined tangent or having a flex direction. For example, ${\mathcal{L}}_{4}([1,1,1]^3)$ represents quartics with three base points that are flex to the line joining any two of them. Since the base points are special, the linear systems will need a different analysis. 
We conclude the emptiness of each linear systems with a triple point by applying birational transformations and splitting off $-1$ curves. The last column of the table indicates the geometric conditions that corresponding to the infinitely near multiplicities. We obtain the following result:
\newpage

\begin{lemma}\label {Empty poly}
The linear systems corresponding to the following polygons become empty after imposing a triple point.
\end{lemma}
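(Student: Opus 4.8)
The plan is to handle each of the five surviving polygons separately by the geometric method, translating the emptiness question into a computation on a rational surface birational to $\bbP^2$. For a fixed polygon $\mathcal{P}$, I would first dualize its angles to construct the fan, resolve the resulting (possibly singular) toric surface, and thereby present $X_{\mathcal{P}}$ as an explicit iterated blow-up $\pi\colon X_{\mathcal{P}}\to\bbP^2$ with $\bbP^2$ as minimal model. Pushing $\cL$ down, the ample bundle becomes a planar system $\cL_d(\underline{m})$ whose multiplicities $\underline{m}$ record the centers of $\pi$; because these centers are imposed by the toric boundary, the associated base points are \emph{special} --- collinear, infinitely near, or carrying prescribed tangent or flex directions --- which is precisely why the refined notations $\cL_d([1,1])$, $\cL_d([2,1])$ and $\cL_d([1,1,1])$ are required.

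With this dictionary fixed, imposing a triple point at a general point $p\in X_{\mathcal{P}}$ amounts to adjoining one genuinely general multiplicity-$3$ base point to the planar system, producing $\cL_d(\underline{m},3)$ with virtual dimension $-1$. To prove that the dimension equals $-1$ it suffices to show this system is empty, and I would do so by the classical reduction. Repeatedly apply a quadratic Cremona transformation centered at three suitable base points (the new triple point together with two toric centers), which sends $d\mapsto 2d-m_i-m_j-m_k$ and transforms the three multiplicities by $m_i\mapsto d-m_j-m_k$; after each step strip off the fixed part --- lines through aligned pairs of points, or the exceptional $(-1)$-curves that surface once a multiplicity exceeds what the residual degree can carry. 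Iterating, each of the five systems collapses either to one of negative degree or to a system forced to contain an impossible fixed curve, giving emptiness in every case.

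The main obstacle, and the only place demanding genuine care, is that the base points are never in general position, so the Cremona formulas and the virtual-dimension bookkeeping must be performed with the infinitely near structure and the prescribed tangent and flex directions respected; a naive general-points count can misreport the true dimension. Concretely, when a center carries a fixed tangent one must track how that direction transforms under the quadratic map and whether it forces an extra fixed line, and for the $[1,1,1]$ flex cases one must check that the flex condition is correctly preserved or absorbed. I would organize the proof as a table with one row per polygon, recording $d$, the special multiplicities, the sequence of Cremona maps and split-off $(-1)$-curves, and the resulting empty system; as announced in the text, the last column records the infinitely near multiplicities encoding these conditions. Finally, for any case where the birational reduction is delicate I would cross-check against the algebraic criterion of the preceding paragraph, forming the $6\times 6$ matrix whose columns are the six sections together with their first and second partials in $x$ and $y$ and verifying that its determinant is not identically zero --- a single nonvanishing value of $(x,y)$ already certifies that the six conditions are independent and hence that the system is empty.
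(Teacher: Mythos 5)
Your proposal follows essentially the same route as the paper: dualize the polygon to a fan, resolve, realize $\bbP^2$ as minimal model, translate $\cL$ into a planar system with special (collinear, infinitely near, tangent or flex) base points in the notation $\cL_d([1,1])$, $\cL_d([2,1])$, $\cL_d([1,1,1])$, and then kill the system with a triple point by Cremona transformations and splitting off $(-1)$-curves, recorded in a table --- which is exactly the content of the paper's figure accompanying the lemma. Your added cross-check via the nonvanishing of the $6\times 6$ determinant of sections and their partials is a sensible complement that the paper only invokes in the opposite direction (to certify non-emptiness in Remark \ref{Elimination}), but it does not change the argument.
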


\begin{figure}[H]
\begin{center}
%15
\scalebox{.6} % Change this value to rescale the drawing.

{
\begin{pspicture}(0,-7.84)(8.809375,7.825)
\usefont{T1}{ptm}{m}{n}
\rput(0.08546875,6.31){1.}
\pspolygon[linewidth=0.01](1.204375,6.82)(0.604375,6.22)(0.604375,5.62)(1.804375,5.62)
\psdots[dotsize=0.08](0.604375,6.22)
\psdots[dotsize=0.08](1.204375,6.82)
\psdots[dotsize=0.08](1.204375,6.22)
\psdots[dotsize=0.08](1.804375,5.62)
\psdots[dotsize=0.08](1.204375,5.62)
\psdots[dotsize=0.08](0.604375,5.62)
\usefont{T1}{ptm}{m}{n}
\rput(0.12265625,4.33){2.}
\psdots[dotsize=0.08](0.404375,3.62)
\psdots[dotsize=0.08](1.604375,5.42)
\psdots[dotsize=0.08](1.004375,3.62)
\psdots[dotsize=0.08](0.404375,3.02)
\psdots[dotsize=0.08](1.004375,4.22)
\psdots[dotsize=0.08](1.004375,3.02)
\usefont{T1}{ptm}{m}{n}
\rput(0.11921875,2.33){3.}
\psdots[dotsize=0.08](1.004375,2.82)
\psdots[dotsize=0.08](0.404375,2.22)
\psdots[dotsize=0.08](1.004375,1.62)
\psdots[dotsize=0.08](1.004375,2.22)
\psdots[dotsize=0.08](1.604375,2.22)
\psdots[dotsize=0.08](0.404375,1.62)
\usefont{T1}{ptm}{m}{n}
\rput(0.12375,0.73){4.}
\pspolygon[linewidth=0.01](1.004375,1.22)(1.004375,0.22)(2.004375,0.22)
\psdots[dotsize=0.08](1.004375,1.22)
\psdots[dotsize=0.08](1.004375,0.72)
\psdots[dotsize=0.08](1.504375,0.72)
\psdots[dotsize=0.08](1.004375,0.22)
\psdots[dotsize=0.08](2.004375,0.22)
\psdots[dotsize=0.08](1.484375,0.22)
\usefont{T1}{ptm}{m}{n}
\rput(0.09796875,-1.13){5.}
\psline[linewidth=0.01cm](2.404375,7.02)(2.404375,-2.98)
\psline[linewidth=0.01cm](6.804375,7.02)(6.804375,-2.98)
\psline[linewidth=0.01cm](0.004375,7.82)(8.804375,7.82)
\psline[linewidth=0.01cm](0.004375,7.02)(8.804375,7.02)
\usefont{T1}{ptm}{m}{n}
\rput(3.9832811,7.515){\footnotesize linear system of the toric variety}
\usefont{T1}{ptm}{m}{n}
\rput(1.3315625,7.515){\footnotesize polygon}
\pspolygon[linewidth=0.01](7.724375,6.44)(6.924375,5.44)(8.524375,5.44)
\pspolygon[linewidth=0.01](7.724375,4.84)(6.924375,3.84)(8.524375,3.84)
\pspolygon[linewidth=0.01](7.724375,2.64)(6.924375,1.64)(8.524375,1.64)
\usefont{T1}{ptm}{m}{n}
\rput(4.505781,5.97){${\mathcal{L}}_{3} ([1,1,1],1)$}
\psline[linewidth=0.01cm](7.604375,5.8034177)(7.840154,5.820072)
\psline[linewidth=0.01cm](7.6380577,5.9033456)(7.8064713,5.7201443)
\psline[linewidth=0.01cm](7.78963,5.92)(7.654899,5.70349)
\pscustom[linewidth=0.01]
{
\newpath
\moveto(6.864375,4.06)
\lineto(6.884375,4.03)
\curveto(6.894375,4.015)(6.914375,3.98)(6.924375,3.96)
\curveto(6.934375,3.94)(6.944375,3.905)(6.944375,3.89)
\curveto(6.944375,3.875)(6.934375,3.84)(6.924375,3.82)
\curveto(6.914375,3.8)(6.899375,3.76)(6.894375,3.74)
\curveto(6.889375,3.72)(6.894375,3.68)(6.904375,3.66)
\curveto(6.914375,3.64)(6.934375,3.61)(6.964375,3.58)
}
\pscustom[linewidth=0.01]
{
\newpath
\moveto(7.664375,6.66)
\lineto(7.684375,6.63)
\curveto(7.694375,6.615)(7.714375,6.58)(7.724375,6.56)
\curveto(7.734375,6.54)(7.744375,6.505)(7.744375,6.49)
\curveto(7.744375,6.475)(7.734375,6.44)(7.724375,6.42)
\curveto(7.714375,6.4)(7.699375,6.36)(7.694375,6.34)
\curveto(7.689375,6.32)(7.694375,6.28)(7.704375,6.26)
\curveto(7.714375,6.24)(7.734375,6.21)(7.764375,6.18)
}
\pscircle[linewidth=0.01,dimen=outer](8.524375,5.44){0.12}
\psline[linewidth=0.01cm](7.604375,4.1634173)(7.840154,4.1800723)
\psline[linewidth=0.01cm](7.6380577,4.2633452)(7.8064713,4.0801444)
\psline[linewidth=0.01cm](7.78963,4.28)(7.654899,4.06349)
\psline[linewidth=0.01cm](7.604375,1.9434175)(7.840154,1.9600722)
\psline[linewidth=0.01cm](7.6380577,2.0433455)(7.8064713,1.8601444)
\psline[linewidth=0.01cm](7.78963,2.06)(7.654899,1.8434898)
\pscustom[linewidth=0.01]
{
\newpath
\moveto(8.404375,3.88)
\lineto(8.453787,3.86)
\curveto(8.478493,3.85)(8.523787,3.84)(8.544375,3.84)
\curveto(8.564963,3.84)(8.610257,3.85)(8.634963,3.86)
\curveto(8.659669,3.87)(8.680258,3.9)(8.67614,3.92)
\curveto(8.672023,3.94)(8.651434,3.965)(8.634963,3.97)
\curveto(8.618492,3.975)(8.581434,3.965)(8.560846,3.95)
\curveto(8.540257,3.935)(8.51967,3.895)(8.51967,3.87)
\curveto(8.51967,3.845)(8.523787,3.8)(8.5279045,3.78)
\curveto(8.5320215,3.76)(8.5361395,3.735)(8.5361395,3.72)
}
\pscustom[linewidth=0.01]
{
\newpath
\moveto(7.624375,4.92)
\lineto(7.664375,4.9)
\curveto(7.684375,4.89)(7.714375,4.86)(7.724375,4.84)
\curveto(7.734375,4.82)(7.739375,4.78)(7.734375,4.76)
\curveto(7.729375,4.74)(7.714375,4.71)(7.684375,4.68)
}
\usefont{T1}{ptm}{m}{n}
\rput(4.545781,4.27){${\mathcal{L}}_{4} ([1,1,1],[1,1],[2,1])$}
\usefont{T1}{ptm}{m}{n}
\rput(4.525781,2.03){${\mathcal{L}}_{3} ([1,1],[1,1])$}
\usefont{T1}{ptm}{m}{n}
\rput(4.4057813,0.51){${\mathcal{L}}_{2} $}
\usefont{T1}{ptm}{m}{n}
\rput(4.6753125,-1.63){ ${\mathcal{L}}_{4}([1,1,1]^3).$}
\pspolygon[linewidth=0.01](2.004375,-4.38)(1.404375,-4.38)(1.404375,-3.78)(2.804375,-3.38)(2.804375,-3.78)
\pspolygon[linewidth=0.01](1.404375,-4.78)(1.804375,-6.18)(3.204375,-6.18)(2.604375,-5.38)
\psline[linewidth=0.01cm](0.004375,-2.98)(8.804375,-2.98)
\pspolygon[linewidth=0.01](1.604375,-6.98)(2.204375,-6.58)(2.804375,-6.98)(3.204375,-7.78)(2.804375,-7.78)
\usefont{T1}{ptm}{m}{n}
\rput(0.3478125,-3.285){\footnotesize Note:}
\pspolygon[linewidth=0.01](4.204375,-3.78)(4.804375,-3.18)(5.404375,-3.78)(5.404375,-4.38)(4.804375,-4.38)
\pspolygon[linewidth=0.01](5.204375,-4.78)(4.204375,-6.18)(6.004375,-6.18)(6.004375,-5.58)
\pspolygon[linewidth=0.01](4.204375,-6.98)(4.604375,-7.78)(5.204375,-7.78)(5.204375,-6.38)
\psdots[dotsize=0.08](2.604375,-6.18)
\psdots[dotsize=0.08](2.804375,-3.38)
\psdots[dotsize=0.08](2.004375,-3.78)
\psdots[dotsize=0.08](1.404375,-3.78)
\psdots[dotsize=0.08](2.804375,-3.78)
\psdots[dotsize=0.08](2.004375,-4.38)
\psdots[dotsize=0.08](1.404375,-4.38)
\psdots[dotsize=0.08](4.804375,-3.78)
\psdots[dotsize=0.08](5.404375,-4.38)
\psdots[dotsize=0.08](5.404375,-3.78)
\psdots[dotsize=0.08](4.804375,-3.18)
\psdots[dotsize=0.08](4.204375,-3.78)
\psdots[dotsize=0.08](4.804375,-4.38)
\psdots[dotsize=0.08](1.804375,-5.38)
\psdots[dotsize=0.08](2.604375,-5.38)
\psdots[dotsize=0.08](1.804375,-6.18)
\psdots[dotsize=0.08](1.404375,-4.78)
\psdots[dotsize=0.08](3.204375,-6.18)
\psdots[dotsize=0.08](5.204375,-5.58)
\psdots[dotsize=0.08](5.204375,-6.18)
\psdots[dotsize=0.08](6.004375,-6.18)
\psdots[dotsize=0.08](4.204375,-6.18)
\psdots[dotsize=0.08](5.204375,-4.78)
\psdots[dotsize=0.08](6.004375,-5.58)
\psdots[dotsize=0.08](5.204375,-7.78)
\psdots[dotsize=0.08](2.804375,-7.78)
\psdots[dotsize=0.08](3.204375,-7.78)
\psdots[dotsize=0.08](2.804375,-6.98)
\psdots[dotsize=0.08](2.204375,-6.58)
\psdots[dotsize=0.08](1.604375,-6.98)
\psdots[dotsize=0.08](4.604375,-7.78)
\psdots[dotsize=0.08](5.204375,-6.98)
\psdots[dotsize=0.08](4.604375,-6.98)
\psdots[dotsize=0.08](5.204375,-6.38)
\psdots[dotsize=0.08](4.204375,-6.98)
\psdots[dotsize=0.08](2.204375,-6.98)
\usefont{T1}{ptm}{m}{n}
\rput(3.5557814,-3.87){$\cong$}
\usefont{T1}{ptm}{m}{n}
\rput(3.5557814,-5.47){$\cong$}
\usefont{T1}{ptm}{m}{n}
\rput(3.5557814,-7.27){$\cong$}
\psdots[dotsize=0.08](7.184375,-1.98)
\psdots[dotsize=0.08](8.384375,-1.98)
\psdots[dotsize=0.08](7.784375,-1.18)
\psline[linewidth=0.01cm](7.784375,-1.18)(7.164375,-1.98)
\psline[linewidth=0.01cm](7.164375,-1.98)(8.364375,-1.98)
\psline[linewidth=0.01cm](7.784375,-1.16)(8.364375,-1.96)
\pscustom[linewidth=0.01]
{
\newpath
\moveto(6.944375,-1.9)
\lineto(6.984375,-1.92)
\curveto(7.004375,-1.93)(7.039375,-1.95)(7.054375,-1.96)
\curveto(7.069375,-1.97)(7.099375,-1.98)(7.114375,-1.98)
\curveto(7.129375,-1.98)(7.169375,-1.98)(7.194375,-1.98)
\curveto(7.219375,-1.98)(7.264375,-1.985)(7.284375,-1.99)
\curveto(7.304375,-1.995)(7.334375,-2.015)(7.344375,-2.03)
\curveto(7.354375,-2.045)(7.364375,-2.07)(7.364375,-2.1)
}
\pscustom[linewidth=0.01]
{
\newpath
\moveto(8.304375,-1.78)
\lineto(8.314375,-1.83)
\curveto(8.319375,-1.855)(8.329375,-1.9)(8.334375,-1.92)
\curveto(8.339375,-1.94)(8.354375,-1.96)(8.384375,-1.96)
}
\pscustom[linewidth=0.01]
{
\newpath
\moveto(8.384375,-1.98)
\lineto(8.414375,-2.01)
\curveto(8.429375,-2.025)(8.449375,-2.06)(8.454375,-2.08)
\curveto(8.459375,-2.1)(8.459375,-2.13)(8.444375,-2.16)
}
\pscustom[linewidth=0.01]
{
\newpath
\moveto(7.584375,-1.24)
\lineto(7.634375,-1.24)
\curveto(7.659375,-1.24)(7.699375,-1.23)(7.714375,-1.22)
\curveto(7.729375,-1.21)(7.749375,-1.195)(7.764375,-1.18)
}
\pscustom[linewidth=0.01]
{
\newpath
\moveto(7.784375,-1.16)
\lineto(7.804375,-1.13)
\curveto(7.814375,-1.115)(7.834375,-1.085)(7.844375,-1.07)
\curveto(7.854375,-1.055)(7.879375,-1.035)(7.924375,-1.02)
}
\psdots[dotsize=0.08](1.324375,-1.36)
\psdots[dotsize=0.08](0.984375,-1.98)
\psdots[dotsize=0.08](0.984375,-2.38)
\psdots[dotsize=0.08](0.184375,-2.8)
\psdots[dotsize=0.08](1.784375,0.02)
\psdots[dotsize=0.08](1.004375,-2.78)
\pspolygon[linewidth=0.01](0.404375,2.22)(0.404375,1.62)(1.004375,1.62)(1.604375,2.22)(1.004375,2.82)
\pspolygon[linewidth=0.01](0.204375,-2.78)(1.004375,-2.78)(1.804375,0.02)
\rput{45.0}(4.057047,-4.8345776){\psarc[linewidth=0.01](7.864375,2.48){0.2}{-0.0}{180.0}}
\rput{-180.0}(16.92875,3.72){\psarc[linewidth=0.01](8.464375,1.86){0.2}{-0.0}{180.0}}
\pspolygon[linewidth=0.01](0.404375,3.62)(0.404375,3.02)(1.004375,3.02)(1.604375,5.42)
\end{pspicture} 
}

\end{center}
\caption{}\end{figure}
All the linear systems from the table become empty when imposing a triple point.
For example the fourth polygon in the above table describes a projective plane embedded by a linear system of conics, ${\mathcal{L}}_{2}$. By imposing a triple point ${\mathcal{L}}_{2}$ becomes empty. One can obtain more polygons with an empty linear system by rotating or by shifting the main ones by any integer numbers.

\section{Triple points in ${\mathbb{P}}^ 2$.}

We denote by $V_d$ the image of the Veronese embedding
$v_d:\mathbb{P}^2 \to \mathbb{P}^{d(d+3)/2}$
that transforms the plane curves of degree $d$ to hyperplane sections of the Veronese variety $V_d$.
We degenerate $V_d$ into a union of disjoint surfaces and ordinary planes and we place one point on each one of the disjoint surfaces. 
The surfaces are chosen such that the restriction of a hyperplane section to each one of them to be linear system that becomes empty when we impose a triple point. 
We conclude that any hyperplane section to $V_d$ needs to contain all disjoint surfaces, and in particular all of the coordinate points of the ambient projective space covered in this way. Therefore if $V_{d}$ degenerates exactly into a union of disjoint special surfaces and planes (or quadrics) with no points left over we conclude that the desired linear system is empty, and therefore it has the expected dimension. Using semicontinuity this argument can easily be extended to any degeneration as in \cite{CDM07}.
In order to give an inductive proof for triple points in the projective plane we will first analyze triple points in ${\mathbb{P}}^ 1\times{\mathbb{P}}^ 1$.

%\section{Triple Points in ${\mathbb{P}}^ 1\times{\mathbb{P}}^ 1$.}

 We will only prove the most difficult case when the linear systems in ${\mathbb{P}}^ 1\times{\mathbb{P}}^ 1$ with virtual dimension $-1$ are empty.
The general case will follow by induction, but it was already proved in a similar way using algebraic methods by T. Lenarcik in \cite{Len08}.

\begin{lemma}\label{bidegree $(5,n)$}
Fix $n\geq 3$. Then linear systems of bidegree $(5,n)$ for $n\neq 4, (11,n), (2, 4n-9), (8,2n-3)$ and an arbitrary number of triple points have the expected dimension.
\end{lemma}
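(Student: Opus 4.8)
The plan is to reduce to the case of virtual dimension $-1$ and to realize the bidegree-$(a,b)$ rectangle as a regular subdivision into skew copies of the special polygons of Lemma \ref{Empty poly}. Recall that a linear system of bidegree $(a,b)$ on $\bbP^1\times\bbP^1$ with $r$ triple points has virtual dimension $(a+1)(b+1)-1-6r$, and that its associated polygon is the rectangle $R_{a,b}=[0,a]\times[0,b]$, whose $(a+1)(b+1)$ lattice points are the sections of $\cO(a,b)$. For each of the four families the product $(a+1)(b+1)$ is divisible by $6$: for $(5,n)$ it equals $6(n+1)$, for $(11,n)$ it equals $12(n+1)$, for $(2,4n-9)$ it equals $12(n-2)$, and for $(8,2n-3)$ it equals $18(n-1)$. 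Hence in each case there is an integer $r$ with $(a+1)(b+1)=6r$, i.e. a number $r$ of triple points for which the system has virtual dimension $-1$; it is this \emph{critical} case that I would prove empty. The case of fewer triple points (virtual dimension $\ge 0$, where non-speciality is wanted) and of more triple points (where emptiness is automatic once the critical case is empty) then follow by the standard monotonicity induction indicated in the text, and already carried out by Lenarcik in \cite{Len08}.

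First I would fix $r$ with $(a+1)(b+1)=6r$ and construct, for each family, an explicit decomposition of the lattice points of $R_{a,b}$ into $r$ subsets, each being the six lattice points of one of the special polygons listed in Lemma \ref{Empty poly} (allowing the $SL_2^{\pm}(\mathbb{Z})$-motions and translations of Section \ref{sec:toric}, which preserve both the shape of a polygon and the emptiness of its system under a triple point). The decomposition should be realizable as an iterated application of the degeneration lemma of Section \ref{sec:toric}: one produces a sequence of separating lines, none of them through a lattice point, that repeatedly split the current point set into two convex-separated halves, so that the six-point pieces end up mutually \emph{skew}. Because each separating line is free of lattice points, the filler surfaces interposed along the cuts carry no new lattice point, so the special pieces alone account for all $(a+1)(b+1)=6r$ lattice points of $R_{a,b}$ --- ``no points are left over.'' I would build these decompositions by induction on $n$, peeling off a fixed-width strip of the rectangle that splits into a bounded number of the tiles of Lemma \ref{Empty poly} and leaves a rectangle of the same family with smaller $n$; only a few small values of $n$ (and the excluded value $n=4$ for the family $(5,n)$) need be checked by hand as base cases.

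With the subdivision in place, the conclusion follows exactly as in the $\bbP^2$ argument of this section. I place one general triple point on each skew piece $Y_i$. A section of $\cO(a,b)$ on the central fibre is a compatible collection of sections on the components; its restriction to $Y_i$ lies in the linear system of $Y_i$ and is forced to have a triple point, hence vanishes by Lemma \ref{Empty poly}. Since the pieces are skew and their lattice points exhaust those of $R_{a,b}$, every monomial coefficient of the section is killed, so the section is zero and the central fibre has no sections through the assigned triple points. By semicontinuity --- and here the skewness is essential, since it allows the limit of a general configuration to place one general point on each component --- the system on the general $\bbP^1\times\bbP^1$ with $r$ general triple points is also empty, so $\dim\cL=-1=e(\cL)$.

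The main obstacle is the combinatorial step: producing, uniformly in $n$ and for all four families, a lattice-point partition of $R_{a,b}$ into exactly the admissible six-point shapes of Lemma \ref{Empty poly} that is moreover realized by lattice-point-free separating cuts, so that the pieces are genuinely skew and no stray lattice point survives in the filler. The tiles are not rectangles --- they include the simplex $\cL_2$, the pentagon $\cL_3([1,1],[1,1])$, and the tall quadrilaterals --- and the non-special rectangular block of bidegree $(2,1)$ is explicitly \emph{forbidden} by Remark \ref{Elimination}, so a naive strip-by-strip tiling of a height-one strip fails and the pieces must be interlocked across rows. Verifying that such an interlocking strip exists for each family, that its width divides the induction cleanly, and that the anomalous value $n=4$ in the $(5,n)$ family is exactly the one that cannot be so decomposed, is where the real work lies; everything else is the formal degeneration-and-semicontinuity machinery already established above.
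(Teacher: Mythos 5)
Your strategy coincides with the paper's: reduce to the critical number of triple points where the virtual dimension is $-1$ (your counts $6(n+1)$, $12(n+1)$, $12(n-2)$, $18(n-1)$ are all correct), tile the rectangle $R_{a,b}$ by lattice-point-free cuts into mutually skew copies of the polygons of Lemma \ref{Empty poly}, place one triple point on each tile, and conclude by semicontinuity. Your observation that the non-special $(2,1)$ block of Remark \ref{Elimination} forbids a naive row-by-row tiling is the right diagnosis of where the difficulty sits, and your strip-peeling induction is exactly the paper's concatenation $C^{n}_{5}=C^{i}_{5}+kC^{3}_{5}$ along the edge of length $5$ (with analogues $C^{n}_{11}=C^{i}_{11}+kC^{2}_{11}$, $C^{4n-9}_{2}=(n-3)C^{3}_{2}$, and combinations of $C^{3}_{8}$ and $C^{5}_{8}$).

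The gap is that you never produce the tilings. The entire substance of the paper's proof is the set of explicit figures exhibiting the base blocks $C^{3}_{5}$, $C^{5}_{5}$, $C^{6}_{5}$, $C^{8}_{5}$ --- one base case for each residue of $n$ modulo $4$, with $n=4$ the genuine exception --- together with the base blocks for the other three families; everything you call ``formal machinery'' is indeed formal, and everything you call ``the real work'' is the proof. As written, one cannot check that an interlocking strip of the required width exists for each family, that it uses only the admissible shapes, that the separating cuts avoid lattice points so the interposed filler components contribute no stray sections, and that no lattice point is left uncovered --- and these are precisely the assertions on which the lemma stands or falls. Until you draw (or otherwise specify) the base tilings and verify that the concatenation preserves skewness across the glued edge of length $5$, the argument is a correct plan rather than a proof.
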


\begin{proof} 
\begin{itemize}
\item For any linear systems of bidegree $(5,n)$ we find a skew $n+1$ set of surfaces and we place each of the $n+1$ triple points in one of the surfaces. We denote the degenerations presented below as $C_{5}^{5}$, $C_{5}^{6}$, $C_{5}^{8}$ and $C_{5}^{3}$
\begin{figure}[H]
\begin{center}
\scalebox{1}
{
\begin{pspicture}(0,-1.165625)(10.2,1.205625)
\psframe[linewidth=0.01,dimen=outer](2.19,0.884375)(0.18,-1.125625)
\psline[linewidth=0.01cm](0.595,0.869375)(0.595,-1.130625)
\psline[linewidth=0.01cm](0.995,0.869375)(0.995,-1.130625)
\psline[linewidth=0.01cm](1.395,0.869375)(1.395,-1.130625)
\psline[linewidth=0.01cm](1.795,0.869375)(1.795,-1.130625)
\psline[linewidth=0.01cm](0.195,0.469375)(2.195,0.469375)
\psline[linewidth=0.01cm](0.195,0.069375)(2.195,0.069375)
\psline[linewidth=0.01cm](0.195,-0.330625)(2.195,-0.330625)
\psline[linewidth=0.01cm](0.195,-0.730625)(2.195,-0.730625)
\pspolygon[linewidth=0.0104,fillstyle=vlines,hatchwidth=0.01,hatchangle=45.0,hatchsep=0.04](0.195,0.869375)(0.195,0.069375)(0.995,0.869375)
\pspolygon[linewidth=0.01,fillstyle=vlines,hatchwidth=0.01,hatchangle=45.0,hatchsep=0.04](1.395,0.869375)(2.195,0.869375)(2.195,0.069375)
\pspolygon[linewidth=0.01,fillstyle=vlines,hatchwidth=0.01,hatchangle=45.0,hatchsep=0.04](1.395,0.469375)(1.795,0.069375)(1.795,-0.330625)(0.995,0.069375)(0.995,0.469375)
\pspolygon[linewidth=0.01,fillstyle=vlines,hatchwidth=0.01,hatchangle=45.0,hatchsep=0.04](0.595,0.069375)(1.395,-0.330625)(1.395,-0.730625)(0.995,-0.730625)(0.595,-0.330625)
\pspolygon[linewidth=0.01,fillstyle=vlines,hatchwidth=0.01,hatchangle=45.0,hatchsep=0.04](0.195,-0.330625)(0.995,-1.130625)(0.195,-1.130625)
\pspolygon[linewidth=0.01,fillstyle=vlines,hatchwidth=0.01,hatchangle=45.0,hatchsep=0.04](2.195,-0.330625)(2.195,-1.130625)(1.395,-1.130625)
\usefont{T1}{ptm}{m}{n}
\rput(0.35671875,1.044375){\footnotesize 1}
\usefont{T1}{ptm}{m}{n}
\rput(0.785,1.044375){\footnotesize 2}
\usefont{T1}{ptm}{m}{n}
\rput(1.1751562,1.044375){\footnotesize 3}
\usefont{T1}{ptm}{m}{n}
\rput(1.5870312,1.044375){\footnotesize 4}
\usefont{T1}{ptm}{m}{n}
\rput(1.976875,1.044375){\footnotesize 5}
\usefont{T1}{ptm}{m}{n}
\rput(0.03671875,0.664375){\footnotesize 1}
\usefont{T1}{ptm}{m}{n}
\rput(0.065,0.264375){\footnotesize 2}
\usefont{T1}{ptm}{m}{n}
\rput(0.05515625,-0.135625){\footnotesize 3}
\usefont{T1}{ptm}{m}{n}
\rput(0.06703125,-0.535625){\footnotesize 4}
\usefont{T1}{ptm}{m}{n}
\rput(0.056875,-0.935625){\footnotesize 5}
\psline[linewidth=0.01cm](2.995,0.869375)(2.995,-1.130625)
\psline[linewidth=0.01cm](3.395,0.869375)(3.395,-1.130625)
\psline[linewidth=0.01cm](3.795,0.869375)(3.795,-1.130625)
\psline[linewidth=0.01cm](4.195,0.869375)(4.195,-1.130625)
\psline[linewidth=0.01cm](2.595,0.469375)(4.995,0.469375)
\psline[linewidth=0.01cm](2.595,0.069375)(4.995,0.069375)
\psline[linewidth=0.01cm](2.595,-0.330625)(4.995,-0.330625)
\psline[linewidth=0.01cm](2.595,-0.730625)(4.995,-0.730625)
\pspolygon[linewidth=0.0104,fillstyle=vlines,hatchwidth=0.01,hatchangle=45.0,hatchsep=0.04](2.595,0.869375)(2.595,0.069375)(3.395,0.869375)
\pspolygon[linewidth=0.01,fillstyle=vlines,hatchwidth=0.01,hatchangle=45.0,hatchsep=0.04](4.195,0.869375)(4.995,0.869375)(4.995,0.069375)
\pspolygon[linewidth=0.01,fillstyle=vlines,hatchwidth=0.01,hatchangle=45.0,hatchsep=0.04](2.595,-0.330625)(3.395,-1.130625)(2.595,-1.130625)
\usefont{T1}{ptm}{m}{n}
\rput(2.7567186,1.044375){\footnotesize 1}
\usefont{T1}{ptm}{m}{n}
\rput(3.185,1.044375){\footnotesize 2}
\usefont{T1}{ptm}{m}{n}
\rput(3.5751562,1.044375){\footnotesize 3}
\usefont{T1}{ptm}{m}{n}
\rput(3.9870312,1.044375){\footnotesize 4}
\usefont{T1}{ptm}{m}{n}
\rput(4.376875,1.044375){\footnotesize 5}
\usefont{T1}{ptm}{m}{n}
\rput(2.4367187,0.664375){\footnotesize 1}
\usefont{T1}{ptm}{m}{n}
\rput(2.465,0.264375){\footnotesize 2}
\usefont{T1}{ptm}{m}{n}
\rput(2.4551563,-0.135625){\footnotesize 3}
\usefont{T1}{ptm}{m}{n}
\rput(2.4670312,-0.535625){\footnotesize 4}
\usefont{T1}{ptm}{m}{n}
\rput(2.456875,-0.935625){\footnotesize 5}
\psframe[linewidth=0.01,dimen=outer](5.015,0.869375)(2.595,-1.130625)
\psline[linewidth=0.01cm](4.595,0.869375)(4.595,-1.130625)
\pspolygon[linewidth=0.01,fillstyle=vlines,hatchwidth=0.01,hatchangle=45.0,hatchsep=0.04](3.795,0.869375)(4.595,0.069375)(4.595,-0.330625)(4.195,-0.330625)
\pspolygon[linewidth=0.01,fillstyle=vlines,hatchwidth=0.01,hatchangle=45.0,hatchsep=0.04](3.795,0.469375)(3.795,0.069375)(2.995,-0.330625)(2.995,0.069375)(3.395,0.469375)
\usefont{T1}{ptm}{m}{n}
\rput(4.782031,1.044375){\footnotesize 6}
\psline[linewidth=0.01cm](5.795,0.869375)(5.795,-1.130625)
\psline[linewidth=0.01cm](6.195,0.869375)(6.195,-1.130625)
\psline[linewidth=0.01cm](6.595,0.869375)(6.595,-1.130625)
\psline[linewidth=0.01cm](6.995,0.869375)(6.995,-1.130625)
\psline[linewidth=0.01cm](5.395,0.469375)(8.595,0.469375)
\psline[linewidth=0.01cm](5.395,0.069375)(8.595,0.069375)
\psline[linewidth=0.01cm](5.395,-0.330625)(8.595,-0.330625)
\psline[linewidth=0.01cm](5.395,-0.730625)(8.595,-0.730625)
\pspolygon[linewidth=0.0104,fillstyle=vlines,hatchwidth=0.01,hatchangle=45.0,hatchsep=0.04](5.395,0.869375)(5.395,0.069375)(6.195,0.869375)
\pspolygon[linewidth=0.01,fillstyle=vlines,hatchwidth=0.01,hatchangle=45.0,hatchsep=0.04](7.795,0.869375)(8.595,0.869375)(8.595,0.069375)
\pspolygon[linewidth=0.01,fillstyle=vlines,hatchwidth=0.01,hatchangle=45.0,hatchsep=0.04](5.395,-0.330625)(6.195,-1.130625)(5.395,-1.130625)
\usefont{T1}{ptm}{m}{n}
\rput(5.556719,1.044375){\footnotesize 1}
\usefont{T1}{ptm}{m}{n}
\rput(5.985,1.044375){\footnotesize 2}
\usefont{T1}{ptm}{m}{n}
\rput(6.3751564,1.044375){\footnotesize 3}
\usefont{T1}{ptm}{m}{n}
\rput(6.787031,1.044375){\footnotesize 4}
\usefont{T1}{ptm}{m}{n}
\rput(7.176875,1.044375){\footnotesize 5}
\usefont{T1}{ptm}{m}{n}
\rput(5.2367187,0.664375){\footnotesize 1}
\usefont{T1}{ptm}{m}{n}
\rput(5.265,0.264375){\footnotesize 2}
\usefont{T1}{ptm}{m}{n}
\rput(5.255156,-0.135625){\footnotesize 3}
\usefont{T1}{ptm}{m}{n}
\rput(5.267031,-0.535625){\footnotesize 4}
\usefont{T1}{ptm}{m}{n}
\rput(5.256875,-0.935625){\footnotesize 5}
\psframe[linewidth=0.01,dimen=outer](8.615,0.869375)(5.395,-1.130625)
\psline[linewidth=0.01cm](7.395,0.869375)(7.395,-1.130625)
\usefont{T1}{ptm}{m}{n}
\rput(7.5820312,1.044375){\footnotesize 6}
\psline[linewidth=0.01cm](7.795,0.869375)(7.795,-1.130625)
\psline[linewidth=0.01cm](8.195,0.869375)(8.195,-1.130625)
\pspolygon[linewidth=0.01,fillstyle=vlines,hatchwidth=0.01,hatchangle=45.0,hatchsep=0.04](5.795,0.069375)(6.595,0.869375)(6.595,0.069375)
\pspolygon[linewidth=0.01,fillstyle=vlines,hatchwidth=0.01,hatchangle=45.0,hatchsep=0.04](6.995,0.869375)(7.395,0.869375)(7.795,0.469375)(7.395,0.069375)(6.995,0.469375)
\pspolygon[linewidth=0.01,fillstyle=vlines,hatchwidth=0.01,hatchangle=45.0,hatchsep=0.04](5.795,-0.330625)(6.995,0.069375)(6.595,-0.730625)(6.195,-0.730625)
\pspolygon[linewidth=0.01,fillstyle=vlines,hatchwidth=0.01,hatchangle=45.0,hatchsep=0.04](6.595,-1.130625)(6.995,-0.330625)(7.395,-0.330625)(7.395,-0.730625)(6.995,-1.130625)
\pspolygon[linewidth=0.01,fillstyle=vlines,hatchwidth=0.01,hatchangle=45.0,hatchsep=0.04](7.795,0.069375)(8.195,0.069375)(8.195,-0.330625)(7.395,-1.130625)
\pspolygon[linewidth=0.01,fillstyle=vlines,hatchwidth=0.01,hatchangle=45.0,hatchsep=0.04](7.795,-1.130625)(8.595,-0.330625)(8.595,-1.130625)
\psline[linewidth=0.01cm](9.395,0.869375)(9.395,-1.130625)
\psline[linewidth=0.01cm](8.995,0.469375)(10.195,0.469375)
\psline[linewidth=0.01cm](8.995,0.069375)(10.195,0.069375)
\psline[linewidth=0.01cm](8.995,-0.330625)(10.195,-0.330625)
\psline[linewidth=0.01cm](8.995,-0.730625)(10.195,-0.730625)
\pspolygon[linewidth=0.0104,fillstyle=vlines,hatchwidth=0.01,hatchangle=45.0,hatchsep=0.04](8.995,0.869375)(8.995,0.069375)(9.795,0.869375)
\usefont{T1}{ptm}{m}{n}
\rput(9.156719,1.044375){\footnotesize 1}
\usefont{T1}{ptm}{m}{n}
\rput(9.585,1.044375){\footnotesize 2}
\usefont{T1}{ptm}{m}{n}
\rput(8.836719,0.664375){\footnotesize 1}
\usefont{T1}{ptm}{m}{n}
\rput(8.865,0.264375){\footnotesize 2}
\usefont{T1}{ptm}{m}{n}
\rput(8.855156,-0.135625){\footnotesize 3}
\usefont{T1}{ptm}{m}{n}
\rput(8.867031,-0.535625){\footnotesize 4}
\usefont{T1}{ptm}{m}{n}
\rput(8.856875,-0.935625){\footnotesize 5}
\psframe[linewidth=0.01,dimen=outer](10.195,0.869375)(8.995,-1.130625)
\psline[linewidth=0.01cm](9.795,0.869375)(9.795,-1.130625)
\pspolygon[linewidth=0.01,fillstyle=vlines,hatchwidth=0.01,hatchangle=45.0,hatchsep=0.04](10.195,0.869375)(10.195,0.069375)(9.395,0.069375)
\pspolygon[linewidth=0.01,fillstyle=vlines,hatchwidth=0.01,hatchangle=45.0,hatchsep=0.04](8.995,-0.330625)(9.795,-0.330625)(8.995,-1.130625)
\pspolygon[linewidth=0.01,fillstyle=vlines,hatchwidth=0.01,hatchangle=45.0,hatchsep=0.04](9.395,-1.130625)(10.195,-0.330625)(10.195,-1.130625)
\usefont{T1}{ptm}{m}{n}
\rput(9.975156,1.044375){\footnotesize 3}
\usefont{T1}{ptm}{m}{n}
\rput(7.9820313,1.044375){\footnotesize 7}
\usefont{T1}{ptm}{m}{n}
\rput(8.382031,1.044375){\footnotesize 8}
\psline[linewidth=0.01cm](1.195,0.869375)(0.195,-0.130625)
\psline[linewidth=0.01cm](0.195,-0.130625)(1.195,-1.130625)
\psline[linewidth=0.01cm](1.195,-1.130625)(2.195,-0.130625)
\psline[linewidth=0.01cm](1.195,0.869375)(2.195,-0.130625)
\psline[linewidth=0.01cm](0.615,0.289375)(1.775,-0.550625)
\psline[linewidth=0.01cm](3.595,0.869375)(2.595,-0.130625)
\psline[linewidth=0.01cm](2.595,-0.130625)(3.595,-1.130625)
\pspolygon[linewidth=0.01,fillstyle=vlines,hatchwidth=0.01,hatchangle=45.0,hatchsep=0.04](4.995,-0.330625)(4.195,-1.130625)(4.995,-1.130625)
\pspolygon[linewidth=0.01,fillstyle=vlines,hatchwidth=0.01,hatchangle=46.0,hatchsep=0.04](3.395,-0.330625)(3.395,-0.730625)(3.795,-1.130625)(4.195,-0.730625)(3.795,-0.330625)
\psline[linewidth=0.01cm](3.995,-1.130625)(4.995,-0.130625)
\psline[linewidth=0.01cm](4.995,-0.130625)(3.995,0.869375)
\psline[linewidth=0.01cm](3.735,0.869375)(4.315,-0.830625)
\psline[linewidth=0.01cm](2.915,-0.450625)(4.035,-0.030625)
\psline[linewidth=0.01cm](6.395,0.869375)(5.395,-0.130625)
\psline[linewidth=0.01cm](5.395,-0.130625)(6.395,-1.130625)
\psline[linewidth=0.01cm](7.595,-1.130625)(8.595,-0.130625)
\psline[linewidth=0.01cm](8.595,-0.130625)(7.595,0.869375)
\psline[linewidth=0.01cm](7.795,0.669375)(7.795,0.069375)
\psline[linewidth=0.01cm](7.875,0.589375)(7.335,-1.130625)
\psline[linewidth=0.01cm](6.655,0.869375)(7.555,-0.390625)
\psline[linewidth=0.01cm](7.115,0.229375)(5.615,-0.370625)
\psline[linewidth=0.01cm](7.155,0.189375)(6.475,-1.130625)
\end{pspicture} 
}

\end{center}
\caption{}\end{figure}
For every $n>2$, $n\neq 4$ take $i\in \{3,5,6,8 \}$ such that $\frac{n-i}{4}$ is an integer, $k$. For any arbitrary $n$ we consider the degeneration
$C_{5}^{n}=C_{5}^{i}+k C_{5}^{3}$ where the sum of two blocks means attaching the two disjoint blocks together along the edge of length $5$.

\item For linear systems of bidegree $(11,n)$ and 2$n+2$ triple points we find skew surfaces. We denote the degenerations presented below by $C_{11}^{2}$, $C_{11}^{3}$, and $C_{11}^{4}.$ For every $n>2$ take $i\in \{2,3,4\}$ such that $\frac{n-i}{3}$ is an integer, $k$. For any arbitrary $n$ we consider the degeneration
$C_{11}^{n}=C_{11}^{i}+k C_{11}^{2}.$

\item For curves of bidegree $(2,4n-9)$ we consider the degeneration $C_{2}^{4n-9}$ given by $(n-3)C_{2}^{3}$ (in particular, $C_{2}^{11}=3C_{2}^{3}$) and for $C_{8}^{2n-3}$ we use combinations of $C_{8}^{3}$ and $C_{8}^{5}$

\begin{figure}[H]
\begin{center}
\scalebox{1} % Change this value to rescale the drawing.
{
\begin{pspicture}(0,-1.453125)(9.398125,1.488125)
\psline[linewidth=0.01cm](5.393125,0.151875)(5.393125,-1.448125)
\psline[linewidth=0.01cm](5.793125,0.151875)(5.793125,-1.448125)
\psline[linewidth=0.01cm](6.193125,0.151875)(6.193125,-1.448125)
\psline[linewidth=0.01cm](6.593125,0.151875)(6.593125,-1.448125)
\psline[linewidth=0.01cm](4.993125,-0.248125)(9.393125,-0.248125)
\psline[linewidth=0.01cm](4.993125,-0.648125)(9.393125,-0.648125)
\psline[linewidth=0.01cm](4.993125,-1.048125)(9.393125,-1.048125)
\pspolygon[linewidth=0.0104,fillstyle=vlines,hatchwidth=0.01,hatchangle=45.0,hatchsep=0.04](4.993125,0.151875)(4.993125,-0.648125)(5.793125,0.151875)
\usefont{T1}{ptm}{m}{n}
\rput(5.154844,0.326875){\footnotesize 1}
\usefont{T1}{ptm}{m}{n}
\rput(5.583125,0.326875){\footnotesize 2}
\usefont{T1}{ptm}{m}{n}
\rput(5.9732814,0.326875){\footnotesize 3}
\usefont{T1}{ptm}{m}{n}
\rput(6.385156,0.326875){\footnotesize 4}
\usefont{T1}{ptm}{m}{n}
\rput(6.775,0.326875){\footnotesize 5}
\usefont{T1}{ptm}{m}{n}
\rput(4.8348436,-0.053125){\footnotesize 1}
\usefont{T1}{ptm}{m}{n}
\rput(4.863125,-0.453125){\footnotesize 2}
\usefont{T1}{ptm}{m}{n}
\rput(4.853281,-0.853125){\footnotesize 3}
\usefont{T1}{ptm}{m}{n}
\rput(4.865156,-1.253125){\footnotesize 4}
\psframe[linewidth=0.01,dimen=outer](9.393125,0.151875)(4.993125,-1.448125)
\psline[linewidth=0.01cm](6.993125,0.151875)(6.993125,-1.448125)
\usefont{T1}{ptm}{m}{n}
\rput(7.180156,0.326875){\footnotesize 6}
\psline[linewidth=0.01cm](7.393125,0.151875)(7.393125,-1.448125)
\psline[linewidth=0.01cm](7.793125,0.151875)(7.793125,-1.448125)
\psline[linewidth=0.01cm](8.193125,0.151875)(8.193125,-1.448125)
\psline[linewidth=0.01cm](8.593125,0.151875)(8.593125,-1.448125)
\usefont{T1}{ptm}{m}{n}
\rput(7.5801563,0.326875){\footnotesize 7}
\usefont{T1}{ptm}{m}{n}
\rput(7.974219,0.326875){\footnotesize 8}
\usefont{T1}{ptm}{m}{n}
\rput(8.379844,0.326875){\footnotesize 9}
\usefont{T1}{ptm}{m}{n}
\rput(8.76875,0.326875){\footnotesize 10}
\psline[linewidth=0.01cm](8.993125,0.151875)(8.993125,-1.448125)
\usefont{T1}{ptm}{m}{n}
\rput(9.154843,0.326875){\footnotesize 11}
\pspolygon[linewidth=0.01,fillstyle=vlines,hatchwidth=0.01,hatchangle=45.0,hatchsep=0.04](4.993125,-1.048125)(5.393125,-0.648125)(5.793125,-1.448125)(4.993125,-1.448125)
\pspolygon[linewidth=0.01,fillstyle=vlines,hatchwidth=0.01,hatchangle=45.0,hatchsep=0.04](6.193125,0.151875)(5.793125,-0.248125)(6.193125,-0.648125)(6.593125,-0.248125)(6.593125,0.151875)
\pspolygon[linewidth=0.01,fillstyle=vlines,hatchwidth=0.01,hatchangle=45.0,hatchsep=0.04](5.793125,-0.648125)(6.593125,-1.048125)(6.593125,-1.448125)(6.193125,-1.448125)(5.793125,-1.048125)
\pspolygon[linewidth=0.01,fillstyle=vlines,hatchwidth=0.01,hatchangle=45.0,hatchsep=0.04](6.593125,-0.648125)(6.993125,-0.648125)(7.393125,-0.248125)(7.393125,0.151875)(6.993125,0.151875)
\pspolygon[linewidth=0.01,fillstyle=vlines,hatchwidth=0.01,hatchangle=45.0,hatchsep=0.04](6.993125,-1.048125)(7.393125,-0.648125)(7.793125,-0.648125)(7.393125,-1.448125)(6.993125,-1.448125)
\pspolygon[linewidth=0.01,fillstyle=vlines,hatchwidth=0.01,hatchangle=45.0,hatchsep=0.04](8.993125,-0.648125)(9.393125,-1.048125)(9.393125,-1.448125)(8.593125,-1.448125)
\pspolygon[linewidth=0.01,fillstyle=vlines,hatchwidth=0.01,hatchangle=45.0,hatchsep=0.04](8.593125,0.151875)(9.393125,0.151875)(9.393125,-0.648125)
\psline[linewidth=0.01cm](0.593125,1.151875)(0.593125,0.36012596)
\psline[linewidth=0.01cm](0.993125,1.151875)(0.993125,0.36012596)
\psline[linewidth=0.01cm](1.393125,1.151875)(1.393125,0.36012596)
\psline[linewidth=0.01cm](1.793125,1.151875)(1.793125,0.36012596)
\psline[linewidth=0.01cm](0.193125,0.751875)(4.593125,0.751875)
\psline[linewidth=0.01cm](0.193125,0.351875)(4.593125,0.351875)
\usefont{T1}{ptm}{m}{n}
\rput(0.35484374,1.326875){\footnotesize 1}
\usefont{T1}{ptm}{m}{n}
\rput(0.783125,1.326875){\footnotesize 2}
\usefont{T1}{ptm}{m}{n}
\rput(1.1732812,1.326875){\footnotesize 3}
\usefont{T1}{ptm}{m}{n}
\rput(1.5851562,1.326875){\footnotesize 4}
\usefont{T1}{ptm}{m}{n}
\rput(1.975,1.326875){\footnotesize 5}
\usefont{T1}{ptm}{m}{n}
\rput(0.03484375,0.946875){\footnotesize 1}
\usefont{T1}{ptm}{m}{n}
\rput(0.063125,0.546875){\footnotesize 2}
\psframe[linewidth=0.01,dimen=outer](4.593125,1.151875)(0.193125,0.351875)
\psline[linewidth=0.01cm](2.193125,1.151875)(2.193125,0.36012596)
\usefont{T1}{ptm}{m}{n}
\rput(2.3801563,1.326875){\footnotesize 6}
\psline[linewidth=0.01cm](2.593125,1.151875)(2.593125,0.36012596)
\psline[linewidth=0.01cm](2.993125,1.151875)(2.993125,0.36012596)
\psline[linewidth=0.01cm](3.393125,1.151875)(3.393125,0.36012596)
\psline[linewidth=0.01cm](3.793125,1.151875)(3.793125,0.36012596)
\usefont{T1}{ptm}{m}{n}
\rput(2.7801561,1.326875){\footnotesize 7}
\usefont{T1}{ptm}{m}{n}
\rput(3.1742187,1.326875){\footnotesize 8}
\usefont{T1}{ptm}{m}{n}
\rput(3.5798438,1.326875){\footnotesize 9}
\usefont{T1}{ptm}{m}{n}
\rput(3.96875,1.326875){\footnotesize 10}
\psline[linewidth=0.01cm](4.193125,1.151875)(4.193125,0.36012596)
\usefont{T1}{ptm}{m}{n}
\rput(4.3548436,1.326875){\footnotesize 11}
\psline[linewidth=0.01cm](0.593125,-0.248125)(0.593125,-1.4445562)
\psline[linewidth=0.01cm](0.993125,-0.248125)(0.993125,-1.4445562)
\psline[linewidth=0.01cm](1.393125,-0.248125)(1.393125,-1.4445562)
\psline[linewidth=0.01cm](1.793125,-0.248125)(1.793125,-1.4445562)
\psline[linewidth=0.01cm](0.193125,-0.648125)(4.593125,-0.648125)
\psline[linewidth=0.01cm](0.193125,-1.048125)(4.593125,-1.048125)
\usefont{T1}{ptm}{m}{n}
\rput(0.35484374,-0.073125){\footnotesize 1}
\usefont{T1}{ptm}{m}{n}
\rput(0.783125,-0.073125){\footnotesize 2}
\usefont{T1}{ptm}{m}{n}
\rput(1.1732812,-0.073125){\footnotesize 3}
\usefont{T1}{ptm}{m}{n}
\rput(1.5851562,-0.073125){\footnotesize 4}
\usefont{T1}{ptm}{m}{n}
\rput(1.975,-0.073125){\footnotesize 5}
\usefont{T1}{ptm}{m}{n}
\rput(0.03484375,-0.453125){\footnotesize 1}
\usefont{T1}{ptm}{m}{n}
\rput(0.063125,-0.853125){\footnotesize 2}
\usefont{T1}{ptm}{m}{n}
\rput(0.05328125,-1.253125){\footnotesize 3}
\psframe[linewidth=0.01,dimen=outer](4.593125,-0.248125)(0.193125,-1.4445562)
\psline[linewidth=0.01cm](2.193125,-0.248125)(2.193125,-1.4445562)
\usefont{T1}{ptm}{m}{n}
\rput(2.3801563,-0.073125){\footnotesize 6}
\psline[linewidth=0.01cm](2.593125,-0.248125)(2.593125,-1.4445562)
\psline[linewidth=0.01cm](2.993125,-0.248125)(2.993125,-1.4445562)
\psline[linewidth=0.01cm](3.393125,-0.248125)(3.393125,-1.4445562)
\psline[linewidth=0.01cm](3.793125,-0.248125)(3.793125,-1.4445562)
\usefont{T1}{ptm}{m}{n}
\rput(2.7801561,-0.073125){\footnotesize 7}
\usefont{T1}{ptm}{m}{n}
\rput(3.1742187,-0.073125){\footnotesize 8}
\usefont{T1}{ptm}{m}{n}
\rput(3.5798438,-0.073125){\footnotesize 9}
\usefont{T1}{ptm}{m}{n}
\rput(3.96875,-0.073125){\footnotesize 10}
\psline[linewidth=0.01cm](4.193125,-0.248125)(4.193125,-1.4445562)
\usefont{T1}{ptm}{m}{n}
\rput(4.3548436,-0.073125){\footnotesize 11}
\pspolygon[linewidth=0.01,fillstyle=vlines,hatchwidth=0.01,hatchangle=45.0,hatchsep=0.04](0.193125,1.151875)(0.993125,0.351875)(0.193125,0.351875)
\pspolygon[linewidth=0.01,fillstyle=vlines,hatchwidth=0.01,hatchangle=45.0,hatchsep=0.04](1.793125,1.151875)(2.593125,0.351875)(1.793125,0.351875)
\pspolygon[linewidth=0.01,fillstyle=vlines,hatchwidth=0.01,hatchangle=45.0,hatchsep=0.04](3.393125,1.151875)(4.193125,0.351875)(3.393125,0.351875)
\pspolygon[linewidth=0.01,fillstyle=vlines,hatchwidth=0.01,hatchangle=45.0,hatchsep=0.04](0.193125,-0.648125)(0.993125,-1.448125)(0.193125,-1.448125)
\pspolygon[linewidth=0.01,fillstyle=vlines,hatchwidth=0.01,hatchangle=45.0,hatchsep=0.04](1.393125,-0.648125)(2.193125,-1.448125)(1.393125,-1.448125)
\pspolygon[linewidth=0.01,fillstyle=vlines,hatchwidth=0.01,hatchangle=45.0,hatchsep=0.04](2.593125,-0.648125)(3.393125,-1.448125)(2.593125,-1.448125)
\pspolygon[linewidth=0.01,fillstyle=vlines,hatchwidth=0.01,hatchangle=45.0,hatchsep=0.04](0.593125,1.151875)(1.393125,1.151875)(1.393125,0.351875)
\pspolygon[linewidth=0.01,fillstyle=vlines,hatchwidth=0.01,hatchangle=45.0,hatchsep=0.04](2.193125,1.151875)(2.993125,1.151875)(2.993125,0.351875)
\pspolygon[linewidth=0.01,fillstyle=vlines,hatchwidth=0.01,hatchangle=45.0,hatchsep=0.04](3.793125,1.151875)(4.593125,1.151875)(4.593125,0.351875)
\pspolygon[linewidth=0.01,fillstyle=vlines,hatchwidth=0.01,hatchangle=45.0,hatchsep=0.04](0.193125,-0.248125)(0.993125,-0.248125)(0.993125,-1.048125)
\pspolygon[linewidth=0.01,fillstyle=vlines,hatchwidth=0.01,hatchangle=45.0,hatchsep=0.04](1.393125,-0.248125)(2.193125,-0.248125)(2.193125,-1.048125)
\pspolygon[linewidth=0.01,fillstyle=vlines,hatchwidth=0.01,hatchangle=45.0,hatchsep=0.04](2.593125,-0.248125)(3.393125,-0.248125)(3.393125,-1.048125)
\pspolygon[linewidth=0.01,fillstyle=vlines,hatchwidth=0.01,hatchangle=45.0,hatchsep=0.04](3.793125,-0.248125)(4.593125,-0.248125)(4.593125,-1.048125)
\pspolygon[linewidth=0.01,fillstyle=vlines,hatchwidth=0.01,hatchangle=45.0,hatchsep=0.04](3.793125,-0.648125)(4.593125,-1.448125)(3.793125,-1.448125)
\psline[linewidth=0.01cm](5.993125,0.151875)(4.993125,-0.848125)
\psline[linewidth=0.01cm](5.433125,-0.408125)(5.893125,-1.428125)
\psline[linewidth=0.01cm](6.613125,-1.428125)(7.753125,0.151875)
\psline[linewidth=0.01cm](6.853125,-1.108125)(5.513125,-0.328125)
\psline[linewidth=0.01cm](6.353125,-0.808125)(6.893125,0.151875)
\psline[linewidth=0.01cm](8.393125,0.151875)(9.393125,-0.848125)
\psline[linewidth=0.01cm](8.493125,-1.428125)(8.933125,-0.388125)
\pspolygon[linewidth=0.01,fillstyle=vlines,hatchwidth=0.01,hatchangle=45.0,hatchsep=0.04](7.793125,0.151875)(7.793125,-0.248125)(8.593125,-0.648125)(8.593125,-0.248125)(8.193125,0.151875)
\pspolygon[linewidth=0.01,fillstyle=vlines,hatchwidth=0.01,hatchangle=45.0,hatchsep=0.04](7.793125,-1.448125)(7.793125,-1.048125)(8.193125,-0.648125)(8.593125,-1.048125)(8.193125,-1.448125)
\psline[linewidth=0.01cm](8.753125,-0.808125)(7.513125,-0.188125)
\psline[linewidth=0.01cm](7.913125,-0.408125)(7.593125,-1.428125)
\end{pspicture} 
}

\end{center}
\caption{}\end{figure}
\end{itemize}
\end{proof}

\begin{corollary}\label{main}
Linear systems in ${\mathbb{P}}^ {1}\times {\mathbb{P}}^ {1}$ with triple points of virtual dimension $-1$ are empty.
\end{corollary}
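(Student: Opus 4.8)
The plan is to deduce the statement from Lemma \ref{bidegree $(5,n)$} by an inductive toric decomposition of the quadric, working throughout with the rectangular polygon $R_{a,b}=[0,a]\times[0,b]$ associated to $(\mathbb{P}^1\times\mathbb{P}^1,\mathcal{O}(a,b))$. First I would record the numerical shape of the problem: a triple point imposes six conditions and $h^0(\mathcal{O}(a,b))=(a+1)(b+1)$, so a system of bidegree $(a,b)$ with $r$ triple points has virtual dimension $-1$ exactly when $(a+1)(b+1)=6r$, and for such a system emptiness is the same as having the expected dimension. Using the isomorphism $(\mathbb{P}^1\times\mathbb{P}^1,\mathcal{O}(a,b))\cong(\mathbb{P}^1\times\mathbb{P}^1,\mathcal{O}(b,a))$ together with the divisibility $6\mid(a+1)(b+1)$, I would normalize $(a,b)$ so that one factor, say the first, satisfies $a\equiv 2\pmod 3$; the parity needed to make $(a+1)(b+1)$ even then forces exactly the two shapes ($a$ odd, any $b$) and ($a$ even, $b$ odd) that occur in the four families of Lemma \ref{bidegree $(5,n)$}.

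The reduction itself is a regular subdivision of $R_{a,b}$ into disjoint copies of the five special polygons of Lemma \ref{Empty poly}, together with filler planes and quadrics, arranged so that the special blocks are mutually skew and partition all $(a+1)(b+1)=6r$ lattice points with none left over. The existence of a lifting function for each such subdivision is supplied by repeated application of the skew--surfaces lemma of Section \ref{sec:toric}: each separating line chosen to carry no interior lattice point produces two disjoint toric components in the central fibre, and iterating realizes all the special blocks $Y_i$ as skew. Concretely I would take the spine families $a\in\{2,5,8,11\}$ as base cases (these represent precisely the classes $a\equiv 2,5\pmod 6$ allowed by the normalization) and build a general rectangle by concatenating blocks along shared edges, exactly as in the proof of Lemma \ref{bidegree $(5,n)$}: gluing two blocks along an edge of lattice length $\ell$ identifies a common $\mathbb{P}^1$ carrying $\mathcal{O}(\ell)$, so by the normalization sequence $h^0$ is additive up to the $\ell+1$ sections on that edge, which is precisely the bookkeeping keeping the relation $(a+1)(b+1)=6r$ intact under concatenation.

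Granting such a subdivision, the conclusion is immediate: place one triple point on each special block $Y_i$. By Lemma \ref{Empty poly} the six conditions imposed there are independent, so every section of $\mathcal{O}(a,b)$ restricts to zero on each $Y_i$; since the $Y_i$ are disjoint and their lattice points exhaust all of $R_{a,b}$, equivalently all coordinates of the ambient projective space, any section of the central fibre vanishes identically and the degenerate system is empty. By upper semicontinuity of $h^0$ in the flat family, the general fibre, namely $\mathcal{L}_{(a,b)}(3^r)$ on $\mathbb{P}^1\times\mathbb{P}^1$ with general triple points, is empty as well, which is the assertion. The main obstacle is entirely combinatorial and lies in the second paragraph: one must exhibit, for \emph{every} admissible bidegree, an explicit skew tiling reducing it to the four spine families, controlling both coordinates simultaneously when each of them exceeds $11$ (where direct edge--concatenation in the long direction is obstructed unless $6\mid(b+1)$), and separately dispatching the finite list of small or exceptional bidegrees, such as the excluded $(5,4)$ and the low values $n<3$, for which no clean tiling exists.
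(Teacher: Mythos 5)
Your strategy is the paper's: normalize to the bidegrees with $(a+1)(b+1)\equiv 0\pmod 6$ (the paper writes these as $(6k-1,n)$ and $(3k-1,2n-1)$ with $k$ odd, which is exactly your $a\equiv 2\pmod 3$ normalization), tile the rectangle by skew copies of the special polygons of Lemma \ref{Empty poly} by concatenating the block degenerations $C_5^n$, $C_8^n$, $C_{11}^n$ of Lemma \ref{bidegree $(5,n)$}, put one triple point on each special block, and conclude by semicontinuity. But the ``main obstacle'' you defer to the end is the entire content of the proof, and the one piece of concatenation bookkeeping you do supply is wrong. If two saturated blocks of bidegrees $(5,n_1)$ and $(5,n_2)$ --- each with $n_i+1$ triple points accounting for all $6(n_i+1)$ lattice points --- are glued along a common edge of lattice length $5$ as you propose, the union has $6(n_1+n_2+1)$ lattice points but carries $n_1+n_2+2$ triple points, so the relation $(a+1)(b+1)=6r$ is destroyed rather than preserved; worse, the special surfaces of the two pieces meet along the shared edge and are no longer skew. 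The paper's sum $C_5^n=C_5^i+kC_5^3$ is instead indexed by $n=i+4k$: consecutive blocks are separated by a strip of unit lattice width, so lattice points and triple points both add exactly ($6(i+1)+24k=6(i+4k+1)$) and disjointness is preserved.

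Your worry that concatenation is obstructed when both coordinates are large is resolved in the paper by running the construction in two transverse stages: Lemma \ref{bidegree $(5,n)$} builds $C_5^n$, $C_8^n$, $C_{11}^n$ for every admissible $n$ by stacking in the second coordinate, and the corollary then stacks those blocks in the first coordinate via the explicit recursions $C_{12k'-1}^{n}=k'C_{11}^{n}$, $C_{12k'+5}^{n}=C_{5}^{n}+k'C_{11}^{n}$ for $n\neq4$, and, in the second family, $C_{8}^{2n-1}+rC_{11}^{2n-1}$ or $C_{5}^{2n-1}+C_{8}^{2n-1}+(r-1)C_{11}^{2n-1}$ according to parity; there is no divisibility obstruction of the form $6\mid(b+1)$. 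The only genuine exception, caused by the missing block $C_5^4$ that you correctly flag, is handled by an ad hoc degeneration of $C_{17}^{4}$ extended by $C_{11}^{4}$ blocks. Until you produce these recursions (or equivalent ones with the correct offset bookkeeping) and the exceptional degeneration, the proposal remains a plan rather than a proof.
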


\begin{proof} We have to prove the statement for linear systems of bidegree $(6k-1, n)$ and $(3k-1, 2n-1).$
For the bidegree $(6k-1, n)$ we distinguish two cases. If $k$ is even $k=2k'$ we use the degeneration $C_{12k'-1}^{n}=k'C_{11}^{n}$; while if $k$ is odd of the form $2k'+1$ we use $C_{12k'+5}^{n}=C_{5}^{n}+k'C_{11}^{n}$, for $n\neq 4$. For $n=4$ we use the following degeneration for $C_{17}^{4}$ and we generalize this case by adding $C_{11}^{4}$ blocks
\begin{figure}[H]
\begin{center}
\scalebox{1}
{
\begin{pspicture}(0,-0.953125)(7.0,0.988125)
\psline[linewidth=0.01cm](0.595,0.651875)(0.595,-0.948125)
\psline[linewidth=0.01cm](0.995,0.651875)(0.995,-0.948125)
\psline[linewidth=0.01cm](1.395,0.651875)(1.395,-0.948125)
\psline[linewidth=0.01cm](1.795,0.651875)(1.795,-0.948125)
\psline[linewidth=0.01cm](0.195,0.251875)(6.995,0.251875)
\psline[linewidth=0.01cm](0.195,-0.148125)(6.995,-0.148125)
\psline[linewidth=0.01cm](0.195,-0.548125)(6.995,-0.548125)
\pspolygon[linewidth=0.0104,fillstyle=vlines,hatchwidth=0.01,hatchangle=45.0,hatchsep=0.04](0.195,0.651875)(0.195,-0.148125)(0.995,0.651875)
\usefont{T1}{ptm}{m}{n}
\rput(0.35671875,0.826875){\footnotesize 1}
\usefont{T1}{ptm}{m}{n}
\rput(0.785,0.826875){\footnotesize 2}
\usefont{T1}{ptm}{m}{n}
\rput(1.1751562,0.826875){\footnotesize 3}
\usefont{T1}{ptm}{m}{n}
\rput(1.5870312,0.826875){\footnotesize 4}
\usefont{T1}{ptm}{m}{n}
\rput(1.976875,0.826875){\footnotesize 5}
\usefont{T1}{ptm}{m}{n}
\rput(0.03671875,0.446875){\footnotesize 1}
\usefont{T1}{ptm}{m}{n}
\rput(0.065,0.046875){\footnotesize 2}
\usefont{T1}{ptm}{m}{n}
\rput(0.05515625,-0.353125){\footnotesize 3}
\usefont{T1}{ptm}{m}{n}
\rput(0.06703125,-0.753125){\footnotesize 4}
\psframe[linewidth=0.01,dimen=outer](6.995,0.651875)(0.195,-0.948125)
\psline[linewidth=0.01cm](2.195,0.651875)(2.195,-0.948125)
\usefont{T1}{ptm}{m}{n}
\rput(2.3820312,0.826875){\footnotesize 6}
\psline[linewidth=0.01cm](2.595,0.651875)(2.595,-0.948125)
\psline[linewidth=0.01cm](2.995,0.651875)(2.995,-0.948125)
\psline[linewidth=0.01cm](3.395,0.651875)(3.395,-0.948125)
\psline[linewidth=0.01cm](3.795,0.651875)(3.795,-0.948125)
\usefont{T1}{ptm}{m}{n}
\rput(2.7820313,0.826875){\footnotesize 7}
\usefont{T1}{ptm}{m}{n}
\rput(3.1760938,0.826875){\footnotesize 8}
\usefont{T1}{ptm}{m}{n}
\rput(3.5817187,0.826875){\footnotesize 9}
\usefont{T1}{ptm}{m}{n}
\rput(3.970625,0.826875){\footnotesize 10}
\psline[linewidth=0.01cm](4.195,0.651875)(4.195,-0.948125)
\usefont{T1}{ptm}{m}{n}
\rput(4.3567185,0.826875){\footnotesize 11}
\pspolygon[linewidth=0.01,fillstyle=vlines,hatchwidth=0.01,hatchangle=45.0,hatchsep=0.04](0.195,-0.548125)(0.595,-0.148125)(0.995,-0.948125)(0.195,-0.948125)
\psline[linewidth=0.01cm](5.795,0.651875)(5.795,-0.948125)
\psline[linewidth=0.01cm](5.395,0.651875)(5.395,-0.948125)
\psline[linewidth=0.01cm](4.995,0.651875)(4.995,-0.948125)
\psline[linewidth=0.01cm](4.595,0.651875)(4.595,-0.948125)
\psline[linewidth=0.01cm](6.595,0.651875)(6.595,-0.948125)
\psline[linewidth=0.01cm](6.195,0.651875)(6.195,-0.948125)
\usefont{T1}{ptm}{m}{n}
\rput(4.7696877,0.826875){\footnotesize 12}
\usefont{T1}{ptm}{m}{n}
\rput(5.1629686,0.826875){\footnotesize 13}
\usefont{T1}{ptm}{m}{n}
\rput(5.570781,0.826875){\footnotesize 14}
\usefont{T1}{ptm}{m}{n}
\rput(5.965781,0.826875){\footnotesize 15}
\usefont{T1}{ptm}{m}{n}
\rput(6.3701563,0.826875){\footnotesize 16}
\usefont{T1}{ptm}{m}{n}
\rput(6.769219,0.826875){\footnotesize 17}
\pspolygon[linewidth=0.01,fillstyle=vlines,hatchwidth=0.01,hatchangle=45.0,hatchsep=0.04](1.395,0.651875)(1.795,0.651875)(0.995,-0.548125)(0.995,0.251875)
\pspolygon[linewidth=0.01,fillstyle=vlines,hatchwidth=0.01,hatchangle=45.0,hatchsep=0.04](6.195,0.651875)(6.995,0.651875)(6.995,-0.148125)
\pspolygon[linewidth=0.01,fillstyle=vlines,hatchwidth=0.01,hatchangle=45.0,hatchsep=0.04](6.195,-0.948125)(6.595,-0.148125)(6.995,-0.548125)(6.995,-0.948125)
\psline[linewidth=0.01cm](1.195,0.651875)(0.195,-0.348125)
\psline[linewidth=0.01cm](5.995,0.651875)(6.995,-0.348125)
\psline[linewidth=0.01cm](0.635,0.091875)(1.135,-0.928125)
\psline[linewidth=0.01cm](0.975,-0.608125)(1.995,0.651875)
\pspolygon[linewidth=0.01,fillstyle=vlines,hatchwidth=0.01,hatchangle=45.0,hatchsep=0.04](1.395,-0.948125)(1.395,-0.148125)(2.195,-0.948125)
\pspolygon[linewidth=0.01,fillstyle=vlines,hatchwidth=0.01,hatchangle=45.0,hatchsep=0.04](5.795,-0.948125)(6.195,-0.548125)(5.795,-0.148125)(5.395,-0.548125)(5.395,-0.948125)
\pspolygon[linewidth=0.01,fillstyle=vlines,hatchwidth=0.01,hatchangle=45.0,hatchsep=0.04](5.795,0.651875)(6.195,0.251875)(6.195,-0.148125)(4.995,0.651875)
\pspolygon[linewidth=0.01,fillstyle=vlines,hatchwidth=0.01,hatchangle=45.0,hatchsep=0.04](5.395,0.251875)(5.395,-0.148125)(4.995,-0.548125)(4.595,0.251875)
\pspolygon[linewidth=0.01,fillstyle=vlines,hatchwidth=0.01,hatchangle=45.0,hatchsep=0.04](4.995,-0.948125)(4.595,-0.148125)(4.195,-0.148125)(4.195,-0.548125)(4.595,-0.948125)
\pspolygon[linewidth=0.01,fillstyle=vlines,hatchwidth=0.01,hatchangle=45.0,hatchsep=0.04](4.595,0.651875)(3.795,-0.148125)(3.795,0.651875)
\pspolygon[linewidth=0.01,fillstyle=vlines,hatchwidth=0.01,hatchangle=45.0,hatchsep=0.04](3.395,0.651875)(3.395,0.251875)(2.995,-0.148125)(2.595,0.251875)(2.995,0.651875)
\pspolygon[linewidth=0.01,fillstyle=vlines,hatchwidth=0.01,hatchangle=45.0,hatchsep=0.04](2.595,0.651875)(2.195,-0.148125)(1.795,-0.148125)(1.795,0.251875)(2.195,0.651875)
\pspolygon[linewidth=0.01,fillstyle=vlines,hatchwidth=0.01,hatchangle=45.0,hatchsep=0.04](2.595,-0.948125)(2.195,-0.548125)(2.595,-0.148125)(2.995,-0.548125)(2.995,-0.948125)
\pspolygon[linewidth=0.01,fillstyle=vlines,hatchwidth=0.01,hatchangle=45.0,hatchsep=0.04](3.395,-0.948125)(3.395,-0.148125)(4.195,-0.948125)
\psline[linewidth=0.01cm](2.375,-0.928125)(1.455,-0.028125)
\psline[linewidth=0.01cm](2.655,0.651875)(2.055,-0.628125)
\psline[linewidth=0.01cm](2.475,0.291875)(3.335,-0.928125)
\psline[linewidth=0.01cm](3.035,-0.508125)(3.715,0.651875)
\psline[linewidth=0.01cm](3.415,0.131875)(4.375,-0.928125)
\psline[linewidth=0.01cm](3.855,-0.348125)(4.795,0.631875)
\psline[linewidth=0.01cm](4.795,0.651875)(6.395,-0.308125)
\psline[linewidth=0.01cm](6.515,0.111875)(6.095,-0.948125)
\psline[linewidth=0.01cm](5.055,-0.948125)(4.475,0.291875)
\psline[linewidth=0.01cm](4.935,-0.708125)(5.755,0.071875)
\end{pspicture} 
}

\end{center}
\caption{}\end{figure}
For the bidegree $(3k-1, 2n-1)$ we reduce to the case when $k$ is odd of the form $2k'+1$ and depending on the parity of $k'$, if $6k'=6+12r$ we use the degeneration
$C_{8}^{2n-1}+rC_{11}^{2n-1}$ while $6k'=12r$ we use $C_{5}^{2n-1}+C_{8}^{2n-1}+(r-1)C_{11}^{2n-1}$.
\end{proof}

We can now obtain the following result

\begin{theorem}\label{triple plane}
$\mathcal{L}_d(3^n)$ has the expected dimension whenever $d \geq 5$.
\end{theorem}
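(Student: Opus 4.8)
The plan is to establish both halves of the statement at once — emptiness of $\mathcal{L}_d(3^n)$ when its virtual dimension $v$ is negative, and the equality $\dim\mathcal{L}_d(3^n)=v$ when $v\geq 0$ — through a single toric degeneration of the Veronese surface $V_d$, feeding the $\mathbb{P}^1\times\mathbb{P}^1$ computations of Corollary~\ref{main} into the construction. First I would record the elementary monotonicity that imposing one further general triple point lowers $h^0$ by at most $6$, together with the trivial lower bound $h^0(\mathcal{L}_d(3^n))\geq\max\{v+1,0\}$. These two facts reduce the problem, for each fixed $d$, to the finitely many critical values of $n$ at which $v$ changes sign: once the last non-empty system is shown to have dimension exactly $v$ and the first empty one is shown to be empty, the dimension count for all other $n$ is pinned down by interpolating between them.

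The heart of the argument is then, for each such critical pair $(d,n)$, the construction of a regular subdivision of the degree-$d$ triangle $T_d=\{x,y\geq 0,\ x+y\leq d\}$ into tiles of two sorts: exactly $n$ \emph{special} tiles, each one of the polygons of Lemma~\ref{Empty poly} (or a plane, resp.\ a quadric, for which the degree-one, resp.\ the conic and the $(1,1)$, system visibly carries no triple point), together with a complement of \emph{filler} planes and quadrics on which no point is imposed. Using the skewness lemma of Section~\ref{sec:toric} I would choose the lifting function so that the $n$ special tiles are pairwise skew, allowing one general triple point to be placed on each independently. In the resulting flat family $h^0$ is upper semicontinuous, so $h^0(\mathcal{L}_d(3^n))$ is bounded above by the value on the central fibre; and on the central fibre any limiting section restricts on each special tile to a section of a six-dimensional system annihilated by its triple point, hence vanishes there. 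Thus every lattice coefficient supported on a special tile is forced to zero, and $h^0$ of the central fibre is at most the number of lattice points lying only in filler tiles. Arranging the special tiles to be disjoint and to absorb exactly $\binom{d+2}{2}-\max\{v+1,0\}$ of the points makes this bound equal to $\max\{v+1,0\}$, which with the lower bound gives the required equality.

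To produce these subdivisions uniformly in $d$ I would induct on the degree, splitting off from $T_d$ a rectangular $\mathbb{P}^1\times\mathbb{P}^1$ strip whose internal tiling is one of the configurations $C_5^{\bullet},C_{11}^{\bullet},C_2^{\bullet},C_8^{\bullet}$ of Lemma~\ref{bidegree $(5,n)$} and Corollary~\ref{main}. After an $SL_2^{\pm}(\mathbb{Z})$ normalisation the strip becomes empty under its triple points by Corollary~\ref{main}, while the complementary smaller triangle $T_{d'}$ is handled by the inductive hypothesis and a bounded list of small base degrees is settled by exhibiting explicit subdivisions as in the figures. The divisibility bookkeeping already built into the blocks — the choice of $i\in\{3,5,6,8\}$ or $i\in\{2,3,4\}$ making $(n-i)/4$, resp.\ $(n-i)/3$, integral — is exactly what lets the induction close across all residues.

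The step I expect to be the main obstacle is this last, combinatorial one: guaranteeing that $T_d$ admits such a partition for \emph{every} critical $(d,n)$, including the arithmetically awkward configurations. Matching the global count $\binom{d+2}{2}$ against the admissible tile sizes $6$, $4$, and $3$, while simultaneously keeping the $n$ special tiles disjoint and the whole subdivision regular, is a genuine packing constraint; I would expect the exceptional cases (for instance the bidegree $(5,4)$ excluded in Lemma~\ref{bidegree $(5,n)$}, and the smallest degrees $d=5,6,7$ where no full block fits) to require separate explicit degenerations, with Corollary~\ref{main} supplying the emptiness for all of the generic bulk.
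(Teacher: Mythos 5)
Your proposal follows essentially the same route as the paper: degenerate $V_d$ into pairwise skew special tiles from Lemma \ref{Empty poly} plus filler planes and quadrics, place one triple point per special tile, conclude by semicontinuity, and close the induction on $d$ by splitting off $\mathbb{P}^1\times\mathbb{P}^1$ strips governed by Lemma \ref{bidegree $(5,n)$} and Corollary \ref{main}, with explicit subdivisions for the small base degrees. The combinatorial packing you flag as the main obstacle is exactly what the paper resolves via the recursion $V_{12(k+1)+j}=V_{12k+j}+kC^{11}_{11}+C^{11}_{j+1}+V_{10}$ together with the pictured degenerations of $V_j$ for $j\leq 12$.
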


\begin{proof}
It is enough to prove the theorem for the number of triple points for which the virtual dimension is $-1$ so in that case we claim that the linear system is empty.
Note that $\binom{d+2}{2}\equiv 0$ mod $6$ if $d\equiv \{2,7, 10, 11\}$ mod $6$; $\binom{d+2}{2}\equiv 1$ mod $6$ if $d\equiv \{0,9\}$ mod $6$;
$\binom{d+2}{2}\equiv 3$ mod $6$ if $d\equiv \{1, 4, 5, 8\}$ mod $6$ and $\binom{d+2}{2}\equiv 4$ mod $6$ if $d\equiv \{3, 6\}$ mod $6$.

We will use the induction step
$V_{12(k+1)+j}=V_{12k+j}+kC^{11}_{11}+C^{11}_{j+1}+V_{10}$
with $j=1,...,12$, $k\geq 0$, $(i,j)\neq (1,4)$ and to finish the proof we  present the degenerations of $V_{j}$ if $j\leq 12.$
\begin{figure}[H]
\begin{center}
\scalebox{1} % Change this value to rescale the drawing.
{
\begin{pspicture}(0,-1.33)(3.793125,1.33)
\usefont{T1}{ptm}{m}{n}
\rput(0.93671876,0.535){\footnotesize $V_{12k+j}$}
\psline[linewidth=0.02cm](0.403125,-0.1)(2.383125,-0.12)
\psline[linewidth=0.02cm](0.403125,-0.1)(0.403125,-1.3)
\psline[linewidth=0.02cm](0.403125,-1.3)(3.783125,-1.32)
\psline[linewidth=0.02cm](2.383125,-0.12)(3.743125,-1.3)
\psline[linewidth=0.02cm](2.383125,-0.12)(2.403125,-1.28)
\psline[linewidth=0.02cm](2.183125,-0.12)(2.203125,-1.3)
\usefont{T1}{ptm}{m}{n}
\rput(2.9067187,-0.845){\footnotesize $V_{10}$}
\usefont{T1}{ptm}{m}{n}
\rput(1.3967187,-0.805){\footnotesize $C^{11}_{12k+j+1}$}
\psline[linewidth=0.02cm](0.383125,1.32)(2.383125,-0.12)
\psline[linewidth=0.02cm](0.383125,0.1)(2.083125,0.1)
\psline[linewidth=0.02cm](0.383125,1.32)(0.403125,-0.14)
\end{pspicture} 
}

\end{center}
\caption{}\end{figure}

\begin{figure}[H]
\begin{center}
\scalebox{.65} % Change this value to rescale the drawing.
{
\begin{pspicture}(0,-6.705)(23.005,6.705)
\usefont{T1}{ptm}{m}{n}
\rput(5.2229686,6.075){\footnotesize k=0}
\psline[linewidth=0.01](4.8,5.9)(5.6,5.9)(5.6,6.3)
\usefont{T1}{ptm}{m}{n}
\rput(5.1535935,5.595){\footnotesize $j=1$}
\usefont{T1}{ptm}{m}{n}
\rput(5.1835938,5.195){\footnotesize $e=3$}
\usefont{T1}{ptm}{m}{n}
\rput(3.5135937,5.595){\footnotesize ${V}_{1}$}
\pscircle[linewidth=0.01,dimen=outer](2.22,5.86){0.1}
\pscircle[linewidth=0.01,dimen=outer](2.22,5.12){0.1}
\pscircle[linewidth=0.01,dimen=outer](3.18,5.12){0.1}
\psline[linewidth=0.01cm](0.0,4.3)(23.0,4.3)
\psline[linewidth=0.01cm](5.8,6.7)(5.8,-6.7)
\usefont{T1}{ptm}{m}{n}
\rput(9.953594,5.595){\footnotesize $j=2$}
\usefont{T1}{ptm}{m}{n}
\rput(9.983594,5.195){\footnotesize $e=0$}
\usefont{T1}{ptm}{m}{n}
\rput(8.513594,5.595){\footnotesize ${V}_{2}$}
\pspolygon[linewidth=0.01,fillstyle=vlines,hatchwidth=0.01,hatchangle=45.0,hatchsep=0.04](7.2,4.9)(7.2,6.1)(8.4,4.9)
\psline[linewidth=0.01cm](7.2,5.5)(7.8,5.5)
\psline[linewidth=0.01cm](7.8,5.5)(7.8,4.9)
\psline[linewidth=0.01cm](10.6,6.7)(10.6,-6.7)
\usefont{T1}{ptm}{m}{n}
\rput(15.353594,5.595){\footnotesize $j=3$}
\usefont{T1}{ptm}{m}{n}
\rput(15.383594,5.195){\footnotesize $e=4$}
\usefont{T1}{ptm}{m}{n}
\rput(13.7135935,5.595){\footnotesize ${V}_{3}$}
\pspolygon[linewidth=0.01](12.4,6.1)(12.4,4.9)(13.6,4.9)
\pscircle[linewidth=0.01,dimen=outer](12.42,4.9){0.1}
\pscircle[linewidth=0.01,dimen=outer](12.78,4.9){0.1}
\pscircle[linewidth=0.01,dimen=outer](13.18,4.9){0.1}
\pscircle[linewidth=0.01,dimen=outer](13.58,4.9){0.1}
\usefont{T1}{ptm}{m}{n}
\rput(22.353594,5.595){\footnotesize $j=5$}
\usefont{T1}{ptm}{m}{n}
\rput(22.383595,5.195){\footnotesize $e=3$}
\usefont{T1}{ptm}{m}{n}
\rput(19.913593,5.395){\footnotesize ${V}_{5}$}
\pspolygon[linewidth=0.01,fillstyle=vlines,hatchwidth=0.01,hatchangle=45.0,hatchsep=0.04](18.0,6.5)(18.0,5.7)(18.8,5.7)
\pspolygon[linewidth=0.01](18.0,5.7)(18.0,5.3)(19.2,5.3)(18.8,5.7)
\pspolygon[linewidth=0.01](19.2,5.3)(19.2,4.5)(20.0,4.5)
\pscircle[linewidth=0.01,dimen=outer](20.0,4.5){0.1}
\pscircle[linewidth=0.01,dimen=outer](19.58,4.88){0.1}
\pscircle[linewidth=0.01,dimen=outer](19.58,4.5){0.1}
\pspolygon[linewidth=0.01](1.8,3.5)(1.8,1.1)(4.2,1.1)
\usefont{T1}{ptm}{m}{n}
\rput(3.8535938,2.295){\footnotesize ${V}_{6}$}
\pspolygon[linewidth=0.01,fillstyle=vlines,hatchwidth=0.01,hatchangle=45.0,hatchsep=0.04](1.8,3.5)(1.8,2.7)(2.6,2.7)
\pspolygon[linewidth=0.01,fillstyle=vlines,hatchwidth=0.01,hatchangle=45.0,hatchsep=0.04](1.8,2.3)(2.6,2.3)(1.8,1.5)
\pspolygon[linewidth=0.01,fillstyle=vlines,hatchwidth=0.01,hatchangle=45.0,hatchsep=0.04](1.8,1.1)(2.6,1.9)(2.6,1.1)
\pspolygon[linewidth=0.01,fillstyle=vlines,hatchwidth=0.01,hatchangle=45.0,hatchsep=0.04](3.0,2.3)(3.0,1.5)(3.8,1.5)
\usefont{T1}{ptm}{m}{n}
\rput(5.233594,2.655){\footnotesize $j=6$}
\usefont{T1}{ptm}{m}{n}
\rput(5.2635937,2.255){\footnotesize $e=4$}
\pscircle[linewidth=0.01,dimen=outer](3.0,1.1){0.1}
\pscircle[linewidth=0.01,dimen=outer](3.4,1.1){0.1}
\pscircle[linewidth=0.01,dimen=outer](3.8,1.1){0.1}
\pscircle[linewidth=0.01,dimen=outer](4.2,1.12){0.1}
\psline[linewidth=0.01cm](0.0,0.1)(23.0,0.1)
\pspolygon[linewidth=0.01](6.8,3.5)(6.8,0.7)(9.6,0.7)
\pspolygon[linewidth=0.01,fillstyle=vlines,hatchwidth=0.01,hatchangle=45.0,hatchsep=0.04](6.8,3.5)(6.8,2.7)(7.6,2.7)
\pspolygon[linewidth=0.01,fillstyle=vlines,hatchwidth=0.01,hatchangle=45.0,hatchsep=0.04](6.8,2.3)(6.8,1.9)(7.2,1.5)(7.6,2.3)
\pspolygon[linewidth=0.01,fillstyle=vlines,hatchwidth=0.01,hatchangle=45.0,hatchsep=0.04](8.0,2.3)(7.6,1.9)(7.6,1.5)(8.0,1.5)(8.4,1.9)
\pspolygon[linewidth=0.01,fillstyle=vlines,hatchwidth=0.01,hatchangle=45.0,hatchsep=0.04](6.8,1.5)(6.8,0.7)(7.6,0.7)
\pspolygon[linewidth=0.01,fillstyle=vlines,hatchwidth=0.01,hatchangle=45.0,hatchsep=0.04](8.0,0.7)(8.4,0.7)(8.4,1.5)(7.6,1.1)
\pspolygon[linewidth=0.01,fillstyle=vlines,hatchwidth=0.01,hatchangle=45.0,hatchsep=0.04](8.8,1.5)(8.8,0.7)(9.6,0.7)
\psline[linewidth=0.01cm](6.8,1.1)(9.2,1.1)
\psline[linewidth=0.01cm](6.8,1.5)(8.8,1.5)
\psline[linewidth=0.01cm](6.8,1.9)(8.4,1.9)
\psline[linewidth=0.01cm](6.8,2.3)(8.0,2.3)
\psline[linewidth=0.01cm](7.2,3.1)(7.2,0.7)
\psline[linewidth=0.01cm](7.6,2.7)(7.6,0.7)
\psline[linewidth=0.01cm](8.0,2.3)(8.0,0.7)
\psline[linewidth=0.01cm](8.4,1.9)(8.4,0.7)
\psline[linewidth=0.01cm](9.2,1.1)(9.2,0.7)
\usefont{T1}{ptm}{m}{n}
\rput(10.033594,2.655){\footnotesize $j=7$}
\usefont{T1}{ptm}{m}{n}
\rput(10.063594,2.255){\footnotesize $e=0$}
\usefont{T1}{ptm}{m}{n}
\rput(9.153594,2.075){\footnotesize ${V}_{7}$}
\usefont{T1}{ptm}{m}{n}
\rput(15.433594,2.655){\footnotesize $j=8$}
\usefont{T1}{ptm}{m}{n}
\rput(15.4635935,2.255){\footnotesize $e=3$}
\usefont{T1}{ptm}{m}{n}
\rput(13.753593,2.275){\footnotesize ${V}_{8}$}
\pspolygon[linewidth=0.01](11.6,3.7)(11.6,0.5)(14.8,0.5)
\pspolygon[linewidth=0.01,fillstyle=vlines,hatchwidth=0.01,hatchangle=45.0,hatchsep=0.04](11.6,3.7)(11.6,2.9)(12.4,2.9)
\pspolygon[linewidth=0.01,fillstyle=vlines,hatchwidth=0.01,hatchangle=45.0,hatchsep=0.04](11.6,2.5)(11.6,1.7)(12.4,2.5)
\pspolygon[linewidth=0.01,fillstyle=vlines,hatchwidth=0.01,hatchangle=45.0,hatchsep=0.04](11.6,1.3)(11.6,0.5)(12.4,0.5)
\pspolygon[linewidth=0.01,fillstyle=vlines,hatchwidth=0.01,hatchangle=45.0,hatchsep=0.04](12.4,1.3)(13.2,1.7)(13.2,1.3)(12.8,0.9)(12.4,0.9)
\pspolygon[linewidth=0.01,fillstyle=vlines,hatchwidth=0.01,hatchangle=45.0,hatchsep=0.04](12.8,0.5)(13.6,0.5)(13.6,1.3)
\pspolygon[linewidth=0.01,fillstyle=vlines,hatchwidth=0.01,hatchangle=45.0,hatchsep=0.04](14.0,1.3)(14.0,0.5)(14.8,0.5)
\pscircle[linewidth=0.01,dimen=outer](12.02,1.7){0.1}
\pscircle[linewidth=0.01,dimen=outer](12.0,1.3){0.1}
\pscircle[linewidth=0.01,dimen=outer](13.56,1.72){0.1}
\psline[linewidth=0.01cm](12.0,3.3)(12.0,0.5)
\psline[linewidth=0.01cm](12.4,2.9)(12.4,0.5)
\psline[linewidth=0.01cm](12.8,2.5)(12.8,0.5)
\psline[linewidth=0.01cm](13.2,2.1)(13.2,0.5)
\psline[linewidth=0.01cm](13.6,1.7)(13.6,0.5)
\psline[linewidth=0.01cm](14.4,0.9)(14.4,0.5)
\psline[linewidth=0.01cm](11.6,0.9)(14.4,0.9)
\psline[linewidth=0.01cm](11.6,1.3)(14.0,1.3)
\psline[linewidth=0.01cm](11.6,1.7)(13.6,1.7)
\psline[linewidth=0.01cm](11.6,2.1)(13.2,2.1)
\psline[linewidth=0.01cm](11.6,2.5)(12.8,2.5)
\psline[linewidth=0.01cm](11.6,3.3)(12.0,3.3)
\pspolygon[linewidth=0.01](17.4,3.9)(17.4,0.3)(21.0,0.3)
\pspolygon[linewidth=0.01,fillstyle=vlines,hatchwidth=0.01,hatchangle=45.0,hatchsep=0.04](17.4,3.9)(17.4,3.1)(18.2,3.1)
\pspolygon[linewidth=0.01,fillstyle=vlines,hatchwidth=0.01,hatchangle=45.0,hatchsep=0.04](17.4,2.7)(18.2,2.7)(17.4,1.9)
\pspolygon[linewidth=0.01,fillstyle=vlines,hatchwidth=0.01,hatchangle=45.0,hatchsep=0.04](17.4,1.5)(18.2,2.3)(18.2,1.5)
\pspolygon[linewidth=0.01,fillstyle=vlines,hatchwidth=0.01,hatchangle=45.0,hatchsep=0.04](18.6,2.7)(18.6,1.9)(19.4,1.9)
\pspolygon[linewidth=0.01,fillstyle=vlines,hatchwidth=0.01,hatchangle=45.0,hatchsep=0.04](17.4,1.1)(18.2,1.1)(17.4,0.3)
\pspolygon[linewidth=0.01,fillstyle=vlines,hatchwidth=0.01,hatchangle=45.0,hatchsep=0.04](17.8,0.3)(18.6,0.3)(18.6,1.1)
\pspolygon[linewidth=0.01,fillstyle=vlines,hatchwidth=0.01,hatchangle=45.0,hatchsep=0.04](19.0,0.7)(19.0,1.5)(19.8,1.5)
\pspolygon[linewidth=0.01,fillstyle=vlines,hatchwidth=0.01,hatchangle=45.0,hatchsep=0.04](19.0,0.3)(19.8,0.3)(19.8,1.1)
\pspolygon[linewidth=0.01,fillstyle=vlines,hatchwidth=0.01,hatchangle=45.0,hatchsep=0.04](20.2,0.3)(21.0,0.3)(20.2,1.1)
\usefont{T1}{ptm}{m}{n}
\rput(22.433594,2.655){\footnotesize $j=9$}
\usefont{T1}{ptm}{m}{n}
\rput(22.463594,2.255){\footnotesize $e=1$}
\usefont{T1}{ptm}{m}{n}
\rput(19.913593,2.195){\footnotesize ${V}_{9}$}
\pscircle[linewidth=0.01,dimen=outer](18.6,1.5){0.1}
\psline[linewidth=0.01cm](17.8,3.5)(17.8,0.3)
\psline[linewidth=0.01cm](18.2,3.1)(18.2,0.3)
\psline[linewidth=0.01cm](18.6,2.7)(18.6,0.3)
\psline[linewidth=0.01cm](19.0,2.3)(19.0,0.3)
\psline[linewidth=0.01cm](19.4,1.9)(19.4,0.3)
\psline[linewidth=0.01cm](19.8,1.5)(19.8,0.3)
\psline[linewidth=0.01cm](20.6,0.7)(20.6,0.3)
\psline[linewidth=0.01cm](17.4,0.7)(20.6,0.7)
\psline[linewidth=0.01cm](17.4,1.1)(20.2,1.1)
\psline[linewidth=0.01cm](17.4,1.5)(19.8,1.5)
\psline[linewidth=0.01cm](17.4,1.9)(19.4,1.9)
\psline[linewidth=0.01cm](17.4,2.3)(19.0,2.3)
\psline[linewidth=0.01cm](17.4,2.7)(18.6,2.7)
\psline[linewidth=0.01cm](17.4,3.5)(17.8,3.5)
\pspolygon[linewidth=0.01](1.2,-1.1)(1.2,-5.1)(5.2,-5.1)
\usefont{T1}{ptm}{m}{n}
\rput(3.8335938,-2.925){\footnotesize ${V}_{10}$}
\pspolygon[linewidth=0.01,fillstyle=vlines,hatchwidth=0.01,hatchangle=45.0,hatchsep=0.08](1.2,-1.1)(1.2,-3.9)(4.0,-3.9)
\pspolygon[linewidth=0.01,fillstyle=vlines,hatchwidth=0.01,hatchangle=45.0,hatchsep=0.04](1.2,-5.1)(1.2,-4.3)(2.0,-4.3)
\pspolygon[linewidth=0.01,fillstyle=vlines,hatchwidth=0.01,hatchangle=45.0,hatchsep=0.04](1.6,-5.1)(2.4,-5.1)(2.4,-4.3)
\pspolygon[linewidth=0.01,fillstyle=vlines,hatchwidth=0.01,hatchangle=45.0,hatchsep=0.04](2.8,-4.3)(3.6,-4.3)(2.8,-5.1)
\pspolygon[linewidth=0.01,fillstyle=vlines,hatchwidth=0.01,hatchangle=45.0,hatchsep=0.04](3.2,-5.1)(4.0,-5.1)(4.0,-4.3)
\pspolygon[linewidth=0.01,fillstyle=vlines,hatchwidth=0.01,hatchangle=45.0,hatchsep=0.04](4.4,-4.3)(4.4,-5.1)(5.2,-5.1)
\psline[linewidth=0.01cm](1.2,-4.3)(4.4,-4.3)
\psline[linewidth=0.01cm](1.2,-4.7)(4.8,-4.7)
\psline[linewidth=0.01cm](1.6,-4.3)(1.6,-5.1)
\psline[linewidth=0.01cm](2.0,-4.3)(2.0,-5.1)
\psline[linewidth=0.01cm](3.2,-4.3)(3.2,-5.1)
\psline[linewidth=0.01cm](3.6,-4.3)(3.6,-5.1)
\psline[linewidth=0.01cm](4.8,-4.7)(4.8,-5.1)
\usefont{T1}{ptm}{m}{n}
\rput(2.1535938,-3.125){\footnotesize ${V}_{7}$}
\usefont{T1}{ptm}{m}{n}
\rput(5.1135936,-2.745){\footnotesize $j=10$}
\usefont{T1}{ptm}{m}{n}
\rput(5.063594,-3.145){\footnotesize $e=0$}
\psline[linewidth=0.01cm](0.0,-6.7)(23.0,-6.7)
\pspolygon[linewidth=0.01,fillstyle=vlines,hatchwidth=0.01,hatchangle=45.0,hatchsep=0.04](12.4,6.1)(12.4,5.3)(13.2,5.3)
\pspolygon[linewidth=0.01](2.2,5.1)(3.2,5.1)(2.2,5.9)
\psline[linewidth=0.01cm](12.8,4.9)(12.8,5.7)
\psline[linewidth=0.01cm](13.2,4.9)(13.2,5.3)
\psline[linewidth=0.01cm](19.6,4.5)(19.6,4.9)
\psline[linewidth=0.01cm](6.8,3.1)(7.2,3.1)
\psline[linewidth=0.01cm](3.8,1.1)(3.8,1.5)
\psline[linewidth=0.01cm](3.4,1.1)(3.4,1.9)
\psline[linewidth=0.01cm](3.0,1.1)(3.0,2.3)
\psline[linewidth=0.01cm](2.6,1.1)(2.6,2.7)
\psline[linewidth=0.01cm](2.2,1.1)(2.2,3.1)
\psline[linewidth=0.01cm](1.8,3.1)(2.2,3.1)
\psline[linewidth=0.01cm](1.8,2.3)(3.0,2.3)
\psline[linewidth=0.01cm](1.8,1.9)(3.4,1.9)
\psline[linewidth=0.01cm](1.8,1.5)(3.8,1.5)
\psline[linewidth=0.01cm](12.4,5.7)(12.8,5.7)
\psline[linewidth=0.01cm](18.8,4.5)(18.8,5.7)
\psline[linewidth=0.01cm](18.4,4.5)(18.4,6.1)
\psline[linewidth=0.01cm](18.0,6.1)(18.4,6.1)
\psline[linewidth=0.01cm](18.0,4.9)(19.6,4.9)
\pspolygon[linewidth=0.01,fillstyle=vlines,hatchwidth=0.01,hatchangle=45.0,hatchsep=0.04](18.0,5.3)(18.0,4.5)(18.8,4.5)
\pspolygon[linewidth=0.01,fillstyle=vlines,hatchwidth=0.01,hatchangle=45.0,hatchsep=0.04](18.4,5.3)(19.2,4.5)(19.2,5.3)
\psline[linewidth=0.01cm](18.2,4.5)(20.2,4.5)
\psline[linewidth=0.01cm](16.0,6.7)(16.0,-6.7)
\usefont{T1}{ptm}{m}{n}
\rput(10.033594,-2.805){\footnotesize $j=11$}
\usefont{T1}{ptm}{m}{n}
\rput(9.983594,-3.205){\footnotesize $e=0$}
\pspolygon[linewidth=0.01](6.0,-1.1)(6.0,-5.5)(10.4,-5.5)
\pspolygon[linewidth=0.01,fillstyle=vlines,hatchwidth=0.01,hatchangle=45.0,hatchsep=0.08](6.0,-1.1)(6.0,-3.9)(8.8,-3.9)
\pspolygon[linewidth=0.01,fillstyle=vlines,hatchwidth=0.01,hatchangle=45.0,hatchsep=0.04](6.0,-4.3)(6.8,-4.3)(6.0,-5.1)
\pspolygon[linewidth=0.01,fillstyle=vlines,hatchwidth=0.01,hatchangle=45.0,hatchsep=0.04](6.0,-5.5)(6.8,-5.5)(6.8,-4.7)
\pspolygon[linewidth=0.01,fillstyle=vlines,hatchwidth=0.01,hatchangle=45.0,hatchsep=0.04](7.2,-4.3)(8.0,-4.3)(7.2,-5.1)
\pspolygon[linewidth=0.01,fillstyle=vlines,hatchwidth=0.01,hatchangle=45.0,hatchsep=0.04](7.2,-5.5)(8.0,-4.7)(8.0,-5.5)
\pspolygon[linewidth=0.01,fillstyle=vlines,hatchwidth=0.01,hatchangle=45.0,hatchsep=0.04](8.4,-4.3)(9.2,-4.3)(8.4,-5.1)
\pspolygon[linewidth=0.01,fillstyle=vlines,hatchwidth=0.01,hatchangle=45.0,hatchsep=0.04](8.4,-5.5)(9.2,-4.7)(9.2,-5.5)
\pspolygon[linewidth=0.01,fillstyle=vlines,hatchwidth=0.01,hatchangle=45.0,hatchsep=0.04](9.6,-4.7)(9.6,-5.5)(10.4,-5.5)
\psline[linewidth=0.01cm](6.0,-4.3)(9.2,-4.3)
\psline[linewidth=0.01cm](6.0,-4.7)(9.6,-4.7)
\psline[linewidth=0.01cm](6.0,-5.1)(10.0,-5.1)
\psline[linewidth=0.01cm](6.4,-4.3)(6.4,-5.5)
\psline[linewidth=0.01cm](6.8,-4.3)(6.8,-5.5)
\psline[linewidth=0.01cm](7.2,-4.3)(7.2,-5.5)
\psline[linewidth=0.01cm](7.6,-4.3)(7.6,-5.5)
\psline[linewidth=0.01cm](8.0,-4.3)(8.0,-5.5)
\psline[linewidth=0.01cm](8.4,-4.3)(8.4,-5.5)
\psline[linewidth=0.01cm](8.8,-4.3)(8.8,-5.5)
\psline[linewidth=0.01cm](9.2,-4.3)(9.2,-5.5)
\psline[linewidth=0.01cm](10.0,-5.1)(10.0,-5.5)
\usefont{T1}{ptm}{m}{n}
\rput(6.913594,-3.205){\footnotesize ${V}_{7}$}
\usefont{T1}{ptm}{m}{n}
\rput(15.433594,-2.805){\footnotesize $j=12$}
\usefont{T1}{ptm}{m}{n}
\rput(15.383594,-3.205){\footnotesize $e=1$}
\pspolygon[linewidth=0.01](10.8,-1.1)(10.8,-5.9)(15.6,-5.9)
\pspolygon[linewidth=0.01,fillstyle=vlines,hatchwidth=0.01,hatchangle=45.0,hatchsep=0.08](10.8,-1.1)(10.8,-3.9)(13.6,-3.9)
\psline[linewidth=0.01cm](10.8,-4.3)(14.0,-4.3)
\psline[linewidth=0.01cm](10.8,-4.7)(14.4,-4.7)
\psline[linewidth=0.01cm](10.8,-5.1)(14.8,-5.1)
\psline[linewidth=0.01cm](11.2,-4.3)(11.2,-5.9)
\psline[linewidth=0.01cm](11.6,-4.3)(11.6,-5.9)
\psline[linewidth=0.01cm](12.0,-4.3)(12.0,-5.9)
\psline[linewidth=0.01cm](12.4,-4.3)(12.4,-5.9)
\psline[linewidth=0.01cm](12.8,-4.3)(12.8,-5.9)
\psline[linewidth=0.01cm](13.2,-4.3)(13.2,-5.9)
\psline[linewidth=0.01cm](13.6,-4.3)(13.6,-5.9)
\psline[linewidth=0.01cm](14.0,-4.3)(14.0,-5.9)
\psline[linewidth=0.01cm](14.8,-5.1)(14.8,-5.9)
\usefont{T1}{ptm}{m}{n}
\rput(11.7135935,-3.205){\footnotesize ${V}_{7}$}
\psline[linewidth=0.01cm](14.4,-4.7)(14.4,-5.9)
\psline[linewidth=0.01cm](15.2,-5.5)(15.2,-5.9)
\psline[linewidth=0.01cm](10.8,-5.5)(15.2,-5.5)
\pspolygon[linewidth=0.01,fillstyle=vlines,hatchwidth=0.01,hatchangle=45.0,hatchsep=0.04](10.8,-5.5)(11.2,-5.1)(11.6,-5.9)(10.8,-5.9)
\pspolygon[linewidth=0.01,fillstyle=vlines,hatchwidth=0.01,hatchangle=45.0,hatchsep=0.04](12.8,-5.9)(13.6,-5.9)(13.2,-5.1)(12.8,-5.5)
\pspolygon[linewidth=0.01,fillstyle=vlines,hatchwidth=0.01,hatchangle=45.0,hatchsep=0.04](14.8,-5.1)(14.8,-5.9)(15.6,-5.9)
\usefont{T1}{ptm}{m}{n}
\rput(22.353594,-2.405){\footnotesize $j=4$}
\usefont{T1}{ptm}{m}{n}
\rput(22.393593,-2.805){\footnotesize $k=1$}
\pspolygon[linewidth=0.01](16.2,-0.1)(16.2,-6.5)(22.6,-6.5)
\pspolygon[linewidth=0.01,fillstyle=vlines,hatchwidth=0.01,hatchangle=45.0,hatchsep=0.08](16.2,-0.1)(16.2,-4.1)(20.2,-4.1)
\psline[linewidth=0.01cm](16.2,-4.5)(20.6,-4.5)
\psline[linewidth=0.01cm](16.2,-4.9)(20.2,-4.9)
\psline[linewidth=0.01cm](16.2,-5.3)(20.2,-5.3)
\psline[linewidth=0.01cm](16.6,-4.5)(16.6,-6.5)
\psline[linewidth=0.01cm](17.0,-4.5)(17.0,-6.5)
\psline[linewidth=0.01cm](17.4,-4.5)(17.4,-6.5)
\psline[linewidth=0.01cm](17.8,-4.5)(17.8,-6.5)
\psline[linewidth=0.01cm](18.2,-4.5)(18.2,-6.5)
\psline[linewidth=0.01cm](18.6,-4.5)(18.6,-6.5)
\psline[linewidth=0.01cm](19.8,-4.5)(19.8,-6.5)
\usefont{T1}{ptm}{m}{n}
\rput(17.393593,-3.005){\footnotesize ${V}_{10}$}
\psline[linewidth=0.01cm](16.2,-5.7)(20.2,-5.7)
\psline[linewidth=0.01cm](16.2,-6.1)(20.2,-6.1)
\pspolygon[linewidth=0.01,fillstyle=vlines,hatchwidth=0.01,hatchangle=45.0,hatchsep=0.08](20.6,-4.5)(20.6,-6.5)(22.6,-6.5)
\pspolygon[linewidth=0.01,fillstyle=vlines,hatchwidth=0.01,hatchangle=45.0,hatchsep=0.04](16.2,-5.7)(16.2,-6.5)(17.0,-6.5)
\pspolygon[linewidth=0.01,fillstyle=vlines,hatchwidth=0.01,hatchangle=45.0,hatchsep=0.04](17.4,-6.5)(17.8,-6.5)(17.8,-6.1)(17.4,-5.7)(17.0,-6.1)
\psline[linewidth=0.01cm](20.2,-4.5)(20.2,-6.5)
\psline[linewidth=0.01cm](19.4,-4.5)(19.4,-6.5)
\psline[linewidth=0.01cm](19.0,-4.5)(19.0,-6.5)
\usefont{T1}{ptm}{m}{n}
\rput(22.383595,-3.205){\footnotesize $e=3$}
\pspolygon[linewidth=0.01,fillstyle=vlines,hatchwidth=0.01,hatchangle=45.0,hatchsep=0.04](18.2,-6.5)(19.0,-6.5)(19.0,-6.1)(18.6,-5.7)
\pspolygon[linewidth=0.01,fillstyle=vlines,hatchwidth=0.01,hatchangle=45.0,hatchsep=0.04](19.4,-6.5)(20.2,-6.5)(20.2,-5.7)
\pspolygon[linewidth=0.01,fillstyle=vlines,hatchwidth=0.01,hatchangle=45.0,hatchsep=0.04](19.4,-6.1)(19.8,-5.7)(19.8,-5.3)(19.4,-5.3)(19.0,-5.7)
\pspolygon[linewidth=0.01,fillstyle=vlines,hatchwidth=0.01,hatchangle=45.0,hatchsep=0.04](20.2,-5.3)(20.2,-4.5)(19.4,-4.5)
\pspolygon[linewidth=0.01,fillstyle=vlines,hatchwidth=0.01,hatchangle=45.0,hatchsep=0.04](19.0,-5.3)(19.4,-4.9)(19.0,-4.5)(18.6,-4.5)(18.6,-4.9)
\pspolygon[linewidth=0.01,fillstyle=vlines,hatchwidth=0.01,hatchangle=45.0,hatchsep=0.04](18.6,-5.3)(18.2,-4.5)(17.4,-4.5)(17.8,-4.9)
\pspolygon[linewidth=0.01,fillstyle=vlines,hatchwidth=0.01,hatchangle=45.0,hatchsep=0.04](18.2,-5.3)(17.4,-4.9)(17.8,-5.7)(18.2,-6.1)
\pspolygon[linewidth=0.01,fillstyle=vlines,hatchwidth=0.01,hatchangle=45.0,hatchsep=0.04](17.0,-4.9)(17.4,-5.3)(17.0,-5.7)(16.6,-5.7)(16.6,-5.3)
\pspolygon[linewidth=0.01,fillstyle=vlines,hatchwidth=0.01,hatchangle=45.0,hatchsep=0.04](16.2,-5.3)(17.0,-4.5)(16.2,-4.5)
\usefont{T1}{ptm}{m}{n}
\rput(21.113594,-5.805){\footnotesize ${V}_{5}$}
\pscircle[linewidth=0.01,dimen=outer](22.58,-6.5){0.1}
\pscircle[linewidth=0.01,dimen=outer](22.18,-6.08){0.1}
\pscircle[linewidth=0.01,dimen=outer](22.18,-6.48){0.1}
\psline[linewidth=0.01cm](6.8,2.5)(7.8,2.5)
\psline[linewidth=0.01cm](8.6,0.7)(8.6,1.7)
\psline[linewidth=0.01cm](6.8,1.7)(7.8,0.7)
\psline[linewidth=0.01cm](7.8,2.5)(7.3,1.22)
\psline[linewidth=0.01cm](7.3,1.2)(8.6,1.68)
\psline[linewidth=0.01cm](11.6,2.7)(12.6,2.7)
\psline[linewidth=0.01cm](13.8,0.5)(13.8,1.5)
\psline[linewidth=0.01cm](11.6,1.5)(12.6,0.5)
\pspolygon[linewidth=0.01,fillstyle=vlines,hatchwidth=0.01,hatchangle=45.0,hatchsep=0.04](12.8,2.5)(12.4,2.1)(12.4,1.7)(12.8,1.7)(13.2,2.1)
\psline[linewidth=0.01cm](12.6,2.7)(11.6,1.5)
\psline[linewidth=0.01cm](12.6,0.5)(13.8,1.5)
\psline[linewidth=0.01cm](13.38,1.92)(11.96,1.14)
\psline[linewidth=0.01cm](17.4,2.9)(18.4,2.9)
\psline[linewidth=0.01cm](20.0,0.3)(20.0,1.3)
\psline[linewidth=0.01cm](20.0,1.3)(18.8,0.3)
\psline[linewidth=0.01cm](18.8,0.3)(18.6,1.5)
\psline[linewidth=0.01cm](19.6,1.7)(18.6,1.5)
\psline[linewidth=0.01cm](17.6,0.3)(18.6,1.5)
\psline[linewidth=0.01cm](17.4,1.3)(18.6,1.5)
\psline[linewidth=0.01cm](18.4,2.9)(18.6,1.5)
\psline[linewidth=0.01cm](17.4,1.7)(18.4,2.9)
\pspolygon[linewidth=0.01,fillstyle=vlines,hatchwidth=0.01,hatchangle=45.0,hatchsep=0.04](11.6,-4.3)(10.8,-4.3)(10.8,-5.1)
\pspolygon[linewidth=0.01,fillstyle=vlines,hatchwidth=0.01,hatchangle=45.0,hatchsep=0.04](12.4,-4.3)(12.4,-5.1)(11.6,-4.7)(12.0,-4.3)
\pspolygon[linewidth=0.01,fillstyle=vlines,hatchwidth=0.01,hatchangle=45.0,hatchsep=0.04](12.8,-4.3)(12.8,-5.1)(13.6,-4.3)
\pspolygon[linewidth=0.01,fillstyle=vlines,hatchwidth=0.01,hatchangle=45.0,hatchsep=0.04](14.0,-4.3)(13.6,-4.7)(13.6,-5.1)(14.0,-5.1)(14.4,-4.7)
\pspolygon[linewidth=0.01,fillstyle=vlines,hatchwidth=0.01,hatchangle=45.0,hatchsep=0.04](14.4,-5.9)(14.4,-5.1)(13.6,-5.5)(14.0,-5.9)
\psline[linewidth=0.01cm](10.8,-5.3)(11.8,-4.3)
\psline[linewidth=0.01cm](14.6,-4.9)(14.6,-5.9)
\psline[linewidth=0.01cm](12.6,-4.3)(12.6,-5.9)
\pspolygon[linewidth=0.01,fillstyle=vlines,hatchwidth=0.01,hatchangle=45.0,hatchsep=0.04](12.4,-5.9)(12.4,-5.5)(12.0,-5.1)(11.6,-5.1)(12.0,-5.9)
\psline[linewidth=0.01cm](11.78,-5.9)(11.3,-4.8)
\psline[linewidth=0.01cm](11.3,-4.78)(12.6,-5.22)
\psline[linewidth=0.01cm](13.8,-4.3)(12.6,-5.3)
\psline[linewidth=0.01cm](13.76,-5.9)(13.14,-4.86)
\psline[linewidth=0.01cm](14.6,-4.9)(13.46,-5.4)
\psline[linewidth=0.01cm](17.2,-4.5)(16.2,-5.5)
\psline[linewidth=0.01cm](16.2,-5.5)(17.2,-6.5)
\psline[linewidth=0.01cm](17.2,-4.5)(18.0,-6.5)
\psline[linewidth=0.01cm](18.74,-5.46)(19.2,-6.5)
\psline[linewidth=0.01cm](19.2,-6.5)(20.2,-5.5)
\psline[linewidth=0.01cm](20.2,-5.5)(19.2,-4.5)
\psline[linewidth=0.01cm](18.76,-5.46)(19.72,-5.02)
\psline[linewidth=0.01cm](18.72,-5.48)(18.02,-6.5)
\psline[linewidth=0.01cm](18.72,-5.46)(17.32,-4.82)
\psline[linewidth=0.01cm](16.7,-6.02)(17.6,-5.5)
\psline[linewidth=0.01cm](18.22,-4.5)(18.74,-5.48)
\end{pspicture} 
}
\end{center}
\caption{}\end{figure}

\end{proof}

\begin{remark}
{\rm
Notice that ${\mathcal{L}_4(3^2)}$ consists of quartics with two triple points and the expected dimension is $2$. This linear system has a fixed part, the double line through the two points and a movable part ${\mathcal{L}_2(1^2)}$ i.e. conics through two points, that has dimension $3$. A simple argument shows that if $d=4$, the linear system ${\mathcal{L}}$ is $-1$--special (we have a $-1$--curve, line connecting the 2 points, splitting off twice) and therefore special.\\
\indent
One could mention that case $d=4$ is also a special case for the double points interpolation problem since ${\mathcal{L}_4(2^5)}$ is expected to be empty but it consists of the double conic determined by the $5$ general points.
}
\end{remark}

\end{document}